\documentclass[preprint,10pt]{elsarticle}

\usepackage[T1]{fontenc}
\usepackage[english]{babel}

\usepackage{amsmath,amssymb,amsthm,mathtools}
\usepackage{graphicx}
\usepackage{accents}
\usepackage{hhline}
\usepackage{lineno}

\usepackage[colorlinks=true,
linkcolor=blue,
citecolor=blue,
urlcolor=blue]{hyperref}

\numberwithin{equation}{section}
\modulolinenumbers[5]

\newtheorem{thm}{Theorem}[section]
\newtheorem{lem}[thm]{Lemma}
\newtheorem{cor}[thm]{Corollary}

\newtheorem{rem}[thm]{Remark}
\newtheorem{defi}[thm]{Definition}

\begin{document}
	
	\begin{frontmatter}
		
		\title{On the Minimax Bifurcation Formula}
		
		\author[inst1]{Y. Sh. Il'yasov\corref{cor1}}
		\ead{ilyasov02@gmail.com}
		
		\address[inst1]{Institute of Mathematics, Ufa Federal Research Centre, RAS,
			Chernyshevsky str. 112, 450008 Ufa, Russia}
		
		\cortext[cor1]{Corresponding author}
		
	\begin{abstract}
		We introduce a Rayleigh-quotient minimax method for locating maximal
		one-sided saddle-node bifurcations in nonlinear equations, including
		non-variational ones. The method avoids branch continuation and instead
		selects the critical point directly through the minimax bifurcation formula
		\[
		\lambda^*
		:=
		\sup_{u\in\mathcal U^o}
		\inf_{v\in\mathcal S\setminus\{0\}}
		\mathcal R(u,v),
		\]
		where \(\mathcal R\) is a two-variable extended Rayleigh quotient on fixed
		cones. A saddle point of this quotient simultaneously determines the
		critical parameter, the bifurcation solution, and the adjoint singularity
		relation. This gives a direct characterization of the bifurcation threshold
		and leads to Galerkin minimax approximations, a posteriori parameter-margin
		estimates, and perturbation bounds for the critical value. The abstract
		assumptions are verified for nonlinear elliptic systems, including
		non-potential systems.
	\end{abstract}
	
		\begin{keyword}
			Bifurcation \sep saddle-node bifurcation \sep nonlinear elliptic equations
			\sep variational methods \sep finite element method
			
			\MSC[2020] 35P30 \sep 35J10 \sep 35J60 \sep 65N30
		\end{keyword}
		
	\end{frontmatter}

\section{Introduction}
\label{sec.Intr}

Saddle-node bifurcations, also known as folds or turning points, are among the
basic mechanisms by which solution branches of nonlinear equations appear or
disappear. In nonlinear differential equations, they often mark critical
parameter values beyond which solutions in a prescribed class cease to exist.
Locating such values is therefore a central problem both in analysis and in
numerical computation.

The standard way to detect saddle-node bifurcations is to continue a branch of
solutions and monitor the loss of invertibility of the linearized operator.
This is the basis of classical local and global bifurcation theory, including
the Crandall--Rabinowitz theorem, Rabinowitz's global theorem, and the
bifurcation theory of Krasno\-sel'ski\u{\i}, as well as of numerical
continuation methods
\cite{CranRibin,keller1,kielh,Kuznet,Seid,Rabin1971,KrasnosBif}. These
methods are powerful, but they are essentially indirect: the critical point is
detected through the behaviour of a solution curve. This becomes especially
delicate in infinite-dimensional problems, in systems, and in models where
positivity or cone constraints are an essential part of the problem.

This paper develops a direct approach to this problem. Maximal one-sided
saddle-node points are identified by means of a \emph{minimax bifurcation
	formula}. Rather than constructing or following a solution branch, the
critical parameter is characterized directly as an extremal value of an
extended Rayleigh quotient, so that the minimax formula itself determines the
saddle-node value.

Let \(\mathcal U^o,\mathcal S\subset W\) be prescribed cones in a Banach
space \(W\). We consider nonlinear equations of the form
\begin{equation}
	\label{Gf-intro}
	\mathcal F(u,\lambda)
	:=
	\mathcal A(u)-\lambda\mathcal G(u)=0,\quad u\in W,
\end{equation}
where \(\mathcal A\) and \(\mathcal G\) take values in \(W^*\) and are
continuously Fr\'echet differentiable in a suitable neighbourhood of the
admissible set \(\mathcal U^o\).

The idea behind the method is simple. If \(u\) is a solution, then testing the
equation against \(v\in\mathcal S\) gives the extended equation
\cite{IlyasFunc}
\[
\langle\mathcal A(u),v\rangle
=
\lambda \langle\mathcal G(u),v\rangle .
\]
Thus, under the denominator positivity condition imposed below, the parameter
\(\lambda\) can be recovered from the extended Rayleigh quotient
\[
\mathcal R(u,v)
:=
\frac{\langle\mathcal A(u),v\rangle}
{\langle\mathcal G(u),v\rangle},\quad \langle\mathcal G(u),v\rangle \neq 0.
\]
We call \(\mathcal R\) the extended Rayleigh quotient
\cite{IlyasJDE24}. Here \(\langle\cdot,\cdot\rangle\) denotes the duality
pairing between \(W^*\) and \(W\).

For an admissible solution \(u\), this quotient is independent of \(v\) and
equals the corresponding parameter \(\lambda\). This leads to the \emph{minimax
bifurcation formula}
\begin{equation}
	\label{MainB}
	\lambda^*
	:=
	\sup_{u\in\mathcal U^o}
	\inf_{v\in\mathcal S\setminus\{0\}}
	\mathcal R(u,v).
\end{equation}
The number \(\lambda^*\) is the natural variational candidate for the maximal
parameter value at which admissible solutions may exist. Indeed, every
admissible solution satisfies
\[
\mathcal R(u,v)=\lambda
\qquad
\forall v\in\mathcal S\setminus\{0\},
\]
and therefore \(\lambda\le\lambda^*\). Hence, if \(\lambda^*<+\infty\),
equation \eqref{Gf-intro} has no admissible solutions for
\(\lambda>\lambda^*\), so that \(\lambda^*\) provides a variational upper
threshold for solvability.

The role of the second variable is crucial. The extended quotient does not
only provide scalar tests for the parameter; it also carries the dual
information needed to identify the point at which the equation becomes
singular. Indeed, suppose, formally, that a pair \((u^*,v^*)\) realizes the
minimax value and satisfies the corresponding Euler conditions
\[
D_v\mathcal R(u^*,v^*)=0,
\qquad
D_u\mathcal R(u^*,v^*)=0 .
\]
Then the first condition recovers the equation itself,
\[
\mathcal F(u^*,\lambda^*)=0,
\]
whereas the second gives the adjoint kernel relation
\[
\big\langle
D_u\mathcal F(u^*,\lambda^*)\varphi,
v^*
\big\rangle=0
\qquad
\forall \varphi\in W .
\]
Thus the minimax construction is designed not merely to detect a critical
parameter value, but to select a singular triple
\((u^*,v^*,\lambda^*)\), and hence a singular point
\((u^*,\lambda^*)\). In this sense, the quotient is genuinely extended: the
additional variable is not an auxiliary testing parameter, but the dual
variable through which the singularity condition is encoded.

This variational viewpoint also gives a new way to quantify the distance to
the threshold. For a known admissible solution \((u_\lambda,\lambda)\), the
gap \(\lambda^*-\lambda\) can be estimated directly through the extended
Rayleigh quotient \(\mathcal R\), without computing the principal eigenvalue
of the linearized operator \(D_u\mathcal F(u_\lambda,\lambda)\). This
provides computable a posteriori certificates in threshold problems such as
pull-in instability, voltage collapse, and tipping phenomena, and also yields
perturbation bounds for the maximal one-sided saddle-node value.

The paper develops this program in an abstract cone setting. We prove that the
saddle-point realization of the minimax formula selects a maximal one-sided
saddle-node point, establish the stability of this construction under
Galerkin approximation, derive a posteriori parameter-margin certificates and
perturbation bounds, and verify the abstract assumptions in model elliptic
problems.

The approach builds on earlier work on generalized Rayleigh quotients and
inverse variational characterizations
\cite{IlyasovCRAS,IlyasFunc,IvanIlya,IlIvan1,IlyasChaos,IlyasJDE24,
	IlyasovPOMI25,Salazar,ilyasELA}. The novelty of the present paper is the
two-cone minimax formulation, its abstract saddle-point interpretation, its
stability under Galerkin approximation, and its use in computable
certificates for maximal one-sided saddle-node values.

The paper is organized as follows. Section~\ref{sec.1} develops the abstract
minimax theory. Section~\ref{sec6} records a posteriori and perturbation
estimates for the maximal one-sided saddle-node value. Section~\ref{sec.13}
contains the proofs of the main abstract results. Sections~\ref{sec.4} and
\ref{sec.5} apply the theory to model elliptic problems.
Section~\ref{sec:ConcRem} contains concluding remarks. The appendices collect
the finite-dimensional cone setting, interpolation estimates, and
Picone-type inequalities used in the compactness arguments.

Throughout the paper, \(c,C,c_1,C_1,\ldots\) denote positive constants whose
values may change from line to line.

\section{Minimax bifurcation formula for an abstract problem}
\label{sec.1}

Throughout this section, \(W\) denotes a reflexive Banach space and
\(\mathcal C^0\) an ordered Banach function space. The duality pairing between
\(W^*\) and \(W\) is denoted by \(\langle\cdot,\cdot\rangle\), and all order
relations are understood in \(\mathcal C^0\). We work with a convex cone
\(\mathcal S^o\subset W\cap\mathcal C^0\) and put
\[
\mathcal S:=\overline{\mathcal S^o}^{\,\mathcal C^0}\cap W .
\]
The cone is assumed to be generating in the sense that
$
\overline{\operatorname{span}\mathcal S^o}^{\,W}=W .
$
\par \noindent
Finally, we fix a subclass 
\[\mathcal U^o\subset\mathcal S^o,\]
 which will
serve as the admissible class of regular solutions.

Finite-dimensional approximations are described by subspaces
\(W_r\subset W\), \(N_r:=\dim W_r\), such that
\(\bigcup_{r\ge1}W_r\) is dense in \(W\), together with interpolation
operators \(\mathcal I_r:\mathcal C^0\to W_r\). We assume the following
compatibility condition for the Galerkin cones.
\begin{description}
	\item[{\rm (C)}]
	The Galerkin cones satisfy the following conditions.
	\begin{description}
		\item[{\rm (c1)}]
		For every \(r\ge1\), \(\mathcal S_r^o\subset W_r\) is a nonempty open
		cone, \(\mathcal S_r:=\overline{\mathcal S_r^o}^{\,W_r}\), and
		\(\mathcal S_r\subset\mathcal S\).
		
		\item[{\rm (c2)}]
		The cone \(\mathcal S_r\) is polyhedral: there is a basis
		\(\{\eta_1,\ldots,\eta_{N_r}\}\) of \(W_r\) such that
		\[
		\mathcal S_r^o
		=
		\left\{
		v=\sum_{i=1}^{N_r}v^i\eta_i\in W_r:
		v^i>0,\ i=1,\ldots,N_r
		\right\}.
		\]
		
		\item[{\rm (c3)}]
		For every \(u\in\mathcal U^o\), the interpolant \(\mathcal I_ru\) is
		well defined and, for all sufficiently large \(r\),	\(\mathcal I_ru\in\mathcal S_r^o\) .
	\end{description}
\end{description}

The central object of the abstract framework is the following one-parameter
equation on the cone \(\mathcal S\):
\begin{equation}\label{Gf}
	\mathcal F(u,\lambda):=\mathcal A(u)-\lambda\mathcal G(u)=0,
	\qquad u\in\mathcal S .
\end{equation}
Here \(\lambda\in\mathbb R\) is the bifurcation parameter, and
\(\mathcal A,\mathcal G:W\to W^*\) are continuously Fr\'echet differentiable
on \(\mathcal U^o\). Thus, for \(u\in\mathcal U^o\), the linearization is well
defined and is given by
\[
D_u\mathcal F(u,\lambda)\xi
=
D_u\mathcal A(u)\xi-\lambda D_u\mathcal G(u)\xi,
\qquad \xi\in W .
\]

We impose the following structural assumptions.
\begin{description}
	\item[{\rm (R)}]
	If \(u\in\mathcal S^o\) weakly satisfies \(\mathcal F(u,\lambda)=0\), then
	\(u\in\mathcal U^o\).
	
	\item[{\rm (D)}]
	\(\langle\mathcal G(u),v\rangle>0\) for every
	\((u,v)\in\mathcal S^o\times(\mathcal S\setminus\{0\})\).
\end{description}
We shall regard \({\rm (R)}\) and \({\rm (D)}\) as standing assumptions.

\begin{defi}
	A solution \((u_0,\lambda_0)\in\mathcal U^o\times\mathbb R\) of
	\eqref{Gf} is called \emph{singular} if there exists
	\(v_0\in\mathcal S\setminus\{0\}\) such that
	\[
	\bigl\langle D_u\mathcal F(u_0,\lambda_0)\xi,v_0\bigr\rangle=0
	\qquad \forall \xi\in W .
	\]
	It is called a \emph{one-sided saddle-node point} in \(\mathcal U^o\) if
	there exist \(\varepsilon>0\) and a neighbourhood \(V\) of \(u_0\) in \(W\)
	such that \eqref{Gf} has no solutions in \(\mathcal U^o\cap V\) for
	\(\lambda\in(\lambda_0,\lambda_0+\varepsilon)\). It is called
	\emph{maximal} if every one-sided saddle-node point
	\((\hat u,\hat\lambda)\in\mathcal U^o\times\mathbb R\) satisfies
	\(\hat\lambda\le\lambda_0\).
\end{defi}

The basic tool of the method is the extended Rayleigh quotient associated with
\eqref{Gf}. It is defined by
\[
\mathcal R(u,v):=
\frac{\langle\mathcal A(u),v\rangle}
{\langle\mathcal G(u),v\rangle},
\qquad
\langle\mathcal G(u),v\rangle\neq0 .
\]
Under \({\rm (D)}\), this quotient is well defined on
\(\mathcal S^o\times(\mathcal S\setminus\{0\})\). For each fixed
\(u\in\mathcal S^o\), the map \(v\mapsto\mathcal R(u,v)\) is continuous on
\(\mathcal S\setminus\{0\}\) in both the strong and weak topologies of \(W\).
Moreover, \(\mathcal R\) is continuously Fr\'echet differentiable on
\(\mathcal U^o\times\mathcal S^o\).

The distinguished variational quantity of the construction is the minimax level
\begin{equation}\label{eq:lambda-star}
	\lambda^*
	:=
	\sup_{u\in\mathcal U^o}
	\inf_{v\in\mathcal S\setminus\{0\}}
	\mathcal R(u,v).
\end{equation}
It will be shown below that, under the appropriate compactness and consistency
assumptions, this value is realized by a singular solution and coincides with
the maximal one-sided saddle-node threshold.

\begin{defi}
	The \emph{minimax bifurcation formula} is said to hold for \eqref{Gf}
	with respect to the cone \(\mathcal S\) if there exists
	\((u^*,v^*)\in\mathcal U^o\times(\mathcal S\setminus\{0\})\) such that
	\begin{equation}\label{MBFDefi}
		\lambda^*
		=
		\mathcal R(u^*,v^*)
		=
		\inf_{v\in\mathcal S\setminus\{0\}}\mathcal R(u^*,v)
		=
		\sup_{u\in\mathcal U^o}
		\inf_{v\in\mathcal S\setminus\{0\}}
		\mathcal R(u,v).
	\end{equation}
\end{defi}

A pair \((u_0,v_0)\in\mathcal U^o\times(\mathcal S\setminus\{0\})\) is called
a \emph{saddle-point solution pair} at the level \(\lambda_0\) if
\[
\mathcal F(u_0,\lambda_0)=0
\quad\text{and}\quad
\bigl\langle D_u\mathcal F(u_0,\lambda_0)\xi,v_0\bigr\rangle=0
\qquad \forall \xi\in W .
\]
Thus, when such a pair realizes the minimax level \(\lambda^*<+\infty\), it
also provides the singularity relation associated with the corresponding
one-sided saddle-node point.

We next pass to Galerkin approximations. The Galerkin approximation of
\eqref{Gf} is to find
\((u,\lambda)\in\mathcal S_r^o\times\mathbb R\) such that
\begin{equation}\label{eq:AppEq}
	\langle\mathcal F(u,\lambda),\zeta\rangle=0
	\qquad \forall \zeta\in W_r .
\end{equation}
For \(u\in\mathcal S_r^o\), set
\[
\lambda_r(u):=
\inf_{v\in\mathcal S_r\setminus\{0\}}\mathcal R(u,v),
\qquad
\lambda_r^*:=
\sup_{u\in\mathcal S_r^o}\lambda_r(u).
\]

We use the following quotient-compatibility condition on the Galerkin spaces.
\begin{description}
	\item[{\rm (CQ)}]
	For every \(r\ge1\),
	\[
	\langle\mathcal G(u),v\rangle>0
	\qquad
	\forall (u,v)\in
	\mathcal S_r^o\times(\mathcal S_r\setminus\{0\}).
	\]
	Moreover, the finite-dimensional restrictions needed below are \(C^1\) on
	\(\mathcal S_r^o\); in particular, \(\mathcal R\) is \(C^1\) on
	\(\mathcal S_r^o\times\mathcal S_r^o\).
\end{description}

By the continuity of \(\mathcal F\), the residual is sequentially closed in
the following sense: if \(u_n\to u\) in \(W\), \(\lambda_n\to\lambda\), and
\(\zeta_n\to\zeta\) in \(W\), with \(u_n\in\mathcal S_{r_n}^o\) and
\(\zeta_n\in W_{r_n}\), then
\[
\langle\mathcal F(u_n,\lambda_n),\zeta_n\rangle
\to
\langle\mathcal F(u,\lambda),\zeta\rangle .
\]
We impose the following closedness assumption for the linearization.
\begin{description}
	\item[{\rm (CD)}]
	For every \(u\in\mathcal U^o\), \(v\in\mathcal S\),
	\(\xi\in W\), and \(\lambda\in\mathbb R\), the following holds:
	whenever \(u_n\in\mathcal S_{r_n}^o\),
	\(v_n\in\mathcal S_{r_n}^o\), \(\xi_n\in W_{r_n}\),
	\(r_n\to\infty\), and \(\lambda_n\to\lambda\), with
	\[
	u_n\to u \quad\text{in }W,\qquad
	\xi_n\to\xi \quad\text{in }W,\qquad
	v_n\rightharpoonup v \quad\text{weakly in }W,
	\]
	one has
	\[
	\bigl\langle
	D_u\mathcal F(u_n,\lambda_n)\xi_n,v_n
	\bigr\rangle
	\to
	\bigl\langle
	D_u\mathcal F(u,\lambda)\xi,v
	\bigr\rangle .
	\]
\end{description}

Condition \({\rm (CD)}\) is automatic if \(\mathcal F\) is continuously
Fréchet differentiable with respect to \(u\) on
\(\mathcal U^o\times\mathbb R\), with values in \(\mathcal L(W,W^*)\).
Thus \({\rm (CD)}\) only has to be checked separately in applications where
the derivative is singular, or where differentiability is available only in
a relative sense along admissible Galerkin sequences.

We shall call a Galerkin scheme
\[
\{W_r,\mathcal S_r^o,\mathcal I_r\}_{r\ge1}
\]
\emph{admissible} if it satisfies
\({\rm (C)}\), \({\rm (CQ)}\), and \({\rm (CD)}\).

\subsection{Compactness and minimax theorem}

We now add the assumptions specific to the nonlinear bifurcation problem.

Let
\(\{W_r,\mathcal S_r^o,\mathcal I_r\}_{r\ge1}\) be a fixed admissible Galerkin
scheme. We assume the following compactness and boundary-exclusion condition.

\begin{description}
	\item[{\rm (H)}]
	For all sufficiently large \(r\ge1\), the following assertions hold.
	\begin{description}
		\item[{\rm (h1)}]
		If \(\lambda_r^*<+\infty\), then there exists
		\(\alpha_r<\lambda_r^*\) such that
		\[
		K_{r,\alpha_r}:=
		\{u\in\mathcal S_r^o:\lambda_r(u)\ge\alpha_r\}
		\]
		has compact closure in \(W_r\), and this closure is contained in
		\(\mathcal S_r^o\).
		
		\item[{\rm (h2)}]
		If \(u\in\mathcal S_r^o\), \(v\in\mathcal S_r\setminus\{0\}\),
		\[
		\mathcal R(u,v)=\lambda_r^*<+\infty,
		\qquad
		D_u\mathcal R(u,v)(\xi)=0\quad\forall\xi\in W_r,
		\]
		then \(v\in\mathcal S_r^o\).
	\end{description}
\end{description}

Although condition \({\rm (H)}\) is formulated in terms of a chosen
finite-dimensional approximation, its meaning is simple. Assumption
\({\rm (h1)}\) says that the relevant superlevel sets remain compactly inside
the positive Galerkin cone. Assumption \({\rm (h2)}\) excludes boundary
minimizing directions; in applications this is precisely the positivity
property expected from the maximum principle for the corresponding linearized
Galerkin problem. Thus, for the standard finite-element Galerkin cones used
below, these requirements become transparent and are verified by compactness,
comparison, and maximum-principle arguments.

\medskip
\medskip
\noindent\textbf{Extended Palais--Smale condition.}
We shall use the notation \((PS)_e\), where the subscript \(e\) stands for
``extended''. This indicates that the compactness requirement is not the
classical Palais--Smale condition for a one-variable functional, but its
two-variable Galerkin analogue adapted to the extended Rayleigh quotient.

\begin{defi}
	The Rayleigh quotient \(\mathcal R\) is said to satisfy the extended
	\((PS)_e\)-condition at level \(\lambda\in\mathbb R\) if every sequence
	\(r_n\to\infty\) and every sequence
	\((u_n,v_n)\in\mathcal S_{r_n}^o\times\mathcal S_{r_n}^o\) satisfying
	\begin{equation}\label{PSEcond}
		\begin{cases}
			\mathcal R(u_n,v_n)\to\lambda,\\
			D_u\mathcal R(u_n,v_n)(\xi)=0
			\quad \forall \xi\in W_{r_n},\\
			D_v\mathcal R(u_n,v_n)(\zeta)=0
			\quad \forall \zeta\in W_{r_n},
		\end{cases}
	\end{equation}
	has a subsequence and positive numbers \(t_n>0\) such that, after replacing
	\(v_n\) by \(t_nv_n\),
	\[
	u_n\to\bar u \quad\text{strongly in }W,
	\qquad
	v_n\rightharpoonup\bar v \quad\text{weakly in }W,
	\]
	with \(\bar u\in\mathcal S^o\) and
	\(\bar v\in\mathcal S\setminus\{0\}\). Such a sequence is called an
	extended \((PS)_e\)-sequence at level \(\lambda\).
\end{defi}

\medskip
\noindent\textbf{Singular Galerkin limits.}
The first result shows how the compactness encoded in the extended
\((PS)_e\)-condition turns finite-dimensional minimax pairs into genuine
singular solutions of the continuous problem.

\begin{thm}[Singular limit theorem]
	\label{Thm1}
	Assume that \(\{W_r,\mathcal S_r^o,\mathcal I_r\}_{r\ge1}\) is an
	admissible Galerkin scheme. Under the standing assumptions, suppose that
	condition \({\rm (H)}\) holds and that
	\(0\le\lambda_r^*<+\infty\) for all sufficiently large \(r\). Then the
	following assertions hold.
	
	\begin{description}
		\item[\((1^\circ)\)]
		For every sufficiently large \(r\), there exist
		\(u_r^*,v_r^*\in\mathcal S_r^o\) such that
		\begin{equation}\label{eq:MMth2}
			\lambda_r^*
			=
			\mathcal R(u_r^*,v_r^*)
			=
			\min_{v\in\mathcal S_r^o}\mathcal R(u_r^*,v)
			=
			\max_{u\in\mathcal S_r^o}
			\min_{v\in\mathcal S_r^o}\mathcal R(u,v).
		\end{equation}
		Moreover,
		\[
		\langle\mathcal F(u_r^*,\lambda_r^*),\zeta\rangle=0
		\quad \forall \zeta\in W_r,
		\qquad
		\bigl\langle
		D_u\mathcal F(u_r^*,\lambda_r^*)\xi,v_r^*
		\bigr\rangle=0
		\quad \forall \xi\in W_r .
		\]
		Thus \((u_r^*,v_r^*)\) is a finite-dimensional saddle-point pair at
		the level \(\lambda_r^*\).
		
		\item[\((2^\circ)\)]
		Assume, in addition, that, along a subsequence,
		\(\lambda_r^*\to\hat\lambda^*\in\mathbb R\), and that
		\(\mathcal R\) satisfies the extended \((PS)_e\)-condition at the level
		\(\hat\lambda^*\). Then, up to a further subsequence and a positive
		rescaling of \(v_r^*\),
		\[
		u_r^*\to u^* \quad\text{strongly in }W,
		\qquad
		v_r^*\rightharpoonup v^* \quad\text{weakly in }W,
		\]
		where \(u^*\in\mathcal U^o\) and
		\(v^*\in\mathcal S\setminus\{0\}\). Moreover,
		\[
		\mathcal F(u^*,\hat\lambda^*)=0
		\quad\text{in }W^*,
		\qquad
		\bigl\langle
		D_u\mathcal F(u^*,\hat\lambda^*)\xi,v^*
		\bigr\rangle=0
		\quad \forall \xi\in W .
		\]
	\end{description}
\end{thm}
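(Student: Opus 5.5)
For part \((1^\circ)\) I will work entirely in the finite-dimensional space \(W_r\), using the polyhedral structure (c2). Fix \(u\in\mathcal S_r^o\). By (CQ) the map \(v\mapsto\mathcal R(u,v)\) is well defined and continuous on \(\mathcal S_r\setminus\{0\}\). It is also \(0\)-homogeneous, so \(\lambda_r(u)\) is the minimum over the compact set \(\mathcal S_r\cap\{\sum_i v^i=1\}\).

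Since \(\mathcal R(u,\cdot)\) is a ratio of linear functionals with positive denominator, it is quasi-linear, and its minimum over the simplex is attained at a vertex. Hence
\[
\lambda_r(u)=\min_i \frac{\langle\mathcal A(u),\eta_i\rangle}{\langle\mathcal G(u),\eta_i\rangle},
\]
so \(\lambda_r\) is continuous on \(\mathcal S_r^o\).

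By (h1), the closure of \(K_{r,\alpha_r}\) is a compact subset of \(\mathcal S_r^o\) on which \(\lambda_r\) is continuous. Therefore \(\lambda_r^*\) is attained at some \(u_r^*\in\mathcal S_r^o\); the supremum outside \(K_{r,\alpha_r}\) is \(\le\alpha_r<\lambda_r^*\).

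Next I produce \(v_r^*\) with \(D_u\mathcal R(u_r^*,v_r^*)=0\) on \(W_r\). Let \(J\) be the set of active indices \(i\), i.e. those with \(\mathcal R(u_r^*,\eta_i)=\lambda_r^*\). Since \(\lambda_r=\min_i\mathcal R(\cdot,\eta_i)\) is a finite minimum of \(C^1\) functions maximized at an interior point, Danskin/Clarke stationarity gives \(0\in\operatorname{co}\{D_u\mathcal R(u_r^*,\eta_i):i\in J\}\). So there are weights \(\mu_i\ge0\), \(\sum\mu_i=1\), with \(\sum_{i\in J}\mu_i D_u\mathcal R(u_r^*,\eta_i)=0\). (Alternatively, this follows from a Farkas-lemma argument: if no such combination existed, some direction \(\xi\) would increase every active quotient, contradicting maximality.)

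On active indices \(\mathcal A-\lambda_r^*\mathcal G\) pairs to zero with \(\eta_i\), so
\[
D_u\mathcal R(u,\eta_i)\xi=\frac{\langle D_u\mathcal F(u,\lambda_r^*)\xi,\eta_i\rangle}{\langle\mathcal G(u),\eta_i\rangle}.
\]
Setting \(v_r^*:=\sum_{i\in J}\mu_i\eta_i/\langle\mathcal G(u_r^*),\eta_i\rangle\in\mathcal S_r\setminus\{0\}\) then gives \(\langle D_u\mathcal F(u_r^*,\lambda_r^*)\xi,v_r^*\rangle=0\) for all \(\xi\in W_r\). Also \(\mathcal R(u_r^*,v_r^*)=\lambda_r^*\), since a positive combination of active directions is active. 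Now (h2) upgrades this to \(v_r^*\in\mathcal S_r^o\). This means every \(\eta_i\) is active, i.e. \(\langle\mathcal F(u_r^*,\lambda_r^*),\eta_i\rangle=0\) for all \(i\), which is the Galerkin equation on \(W_r\). The min over \(\mathcal S_r^o\) equals the min over \(\mathcal S_r\setminus\{0\}\) by density and continuity, which yields \eqref{eq:MMth2}. I expect this nonsmooth stationarity step, together with its interplay with (h2), to be the main obstacle.

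For \((2^\circ)\) I first check that \((u_r^*,v_r^*)\) satisfies \eqref{PSEcond}. \(D_v\mathcal R=0\) on \(W_r\) follows from the Galerkin equation, and \(D_u\mathcal R=0\) follows from the adjoint relation together with \(\mathcal R(u_r^*,v_r^*)=\lambda_r^*\). The \((PS)_e\)-condition then gives, after rescaling \(v_r^*\), the limits \(u_r^*\to u^*\in\mathcal S^o\) strongly and \(v_r^*\rightharpoonup v^*\ne0\) weakly.

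To pass to the limit in the equation, I take \(\zeta\in W\) and approximate it by \(\zeta_r\in W_r\) using density of \(\bigcup W_r\). The sequential closedness of the residual then gives \(\langle\mathcal F(u^*,\hat\lambda^*),\zeta\rangle=0\), so \(\mathcal F(u^*,\hat\lambda^*)=0\) in \(W^*\), and (R) yields \(u^*\in\mathcal U^o\). For the adjoint relation I similarly approximate \(\xi\in W\) by \(\xi_r\in W_r\) and apply (CD), which gives \(\langle D_u\mathcal F(u^*,\hat\lambda^*)\xi,v^*\rangle=0\) for all \(\xi\in W\).
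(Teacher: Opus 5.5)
Your proposal is correct and follows the paper's proof essentially step by step: the reduction $\lambda_r(u)=\min_i\mathcal R(u,\eta_i)$, attainment of $\lambda_r^*$ via (h1), the choice $v_r^*=\sum_i \mu_i\eta_i/\langle\mathcal G(u_r^*),\eta_i\rangle$, the upgrade by (h2), and the passage to the limit via $(PS)_e$, residual closedness, then (R), then (CD) are exactly the paper's steps. The differences are cosmetic and equivalent. You get the multipliers from Clarke/Gordan stationarity of $\min_i\mathcal R(\cdot,\eta_i)$ rather than from the Fritz John rule, whose $\lambda$-component forces $\mu_0=\sum_i\mu_i$ and so gives the same convex-hull condition. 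You also read off the Galerkin equation from ``all indices active'' rather than from $D_v\mathcal R(u_r^*,v_r^*)=0$ at an interior minimizer, which is the same fact by the quotient-rule identity.
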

\medskip

We next relate the Galerkin saddle levels obtained in Theorem~\ref{Thm1} to
the continuous minimax value. For \(u\in\mathcal U^o\), put
\[
\lambda(u):=
\inf_{v\in\mathcal S\setminus\{0\}}\mathcal R(u,v).
\]
Then \eqref{eq:lambda-star} can be written as
\[
\lambda^*=\sup_{u\in\mathcal U^o}\lambda(u).
\]
To pass from the Galerkin minimax levels to the continuous minimax value, we
need the following one-sided consistency property. It is required only near
the top level \(\lambda^*\) and says that the interpolation procedure does
not decrease the inner minimax value in the limit.

\begin{description}
	\item[{\rm (LC)}]
	Assume that \(\lambda^*<+\infty\). There exists \(\delta_0>0\) such that,
	for every \(u\in\mathcal U^o\) with
	\(\lambda(u)\ge\lambda^*-\delta_0\),
	\[
	\liminf_{r\to\infty}\lambda_r(\mathcal I_r u)
	\ge
	\lambda(u).
	\]
\end{description}

\medskip
\noindent\textbf{Main abstract theorem.}
We can now state the main result of this section. It combines the singular
Galerkin limits of Theorem~\ref{Thm1} with the one-sided consistency condition
to obtain the full minimax bifurcation formula with respect to the cone
\(\mathcal S\), and identifies the resulting value as the maximal one-sided
saddle-node threshold.

\begin{thm}[Minimax bifurcation formula]
	\label{Thm2}
	Assume the standing assumptions and \({\rm (LC)}\). Suppose that condition
	\({\rm (H)}\) holds, that the minimax levels satisfy
	\(0<c\le\lambda_r^*\le C<+\infty\) for all sufficiently large \(r\), and
	that \(\mathcal R\) satisfies the extended \((PS)_e\)-condition at every
	level \(\lambda\in[c,C]\).
	
	Then the minimax bifurcation formula holds for \eqref{Gf}. More precisely,
	there exists
	\((u^*,v^*)\in\mathcal U^o\times(\mathcal S\setminus\{0\})\) such that
	\begin{equation}\label{MBFThm}
		c\le \lambda^*
		=
		\mathcal R(u^*,v^*)
		=
		\inf_{v\in\mathcal S\setminus\{0\}}\mathcal R(u^*,v)
		=
		\sup_{u\in\mathcal U^o}
		\inf_{v\in\mathcal S\setminus\{0\}}
		\mathcal R(u,v).
	\end{equation}
	Moreover,
	\[
	\mathcal F(u^*,\lambda^*)=0
	\quad\text{in }W^*,
	\qquad
	\bigl\langle
	D_u\mathcal F(u^*,\lambda^*)\xi,v^*
	\bigr\rangle=0
	\quad \forall \xi\in W .
	\]
	In addition, \eqref{Gf} has no solution \(u\in\mathcal U^o\) for any
	\(\lambda>\lambda^*\). Consequently, \((u^*,\lambda^*)\) is a maximal
	one-sided saddle-node point in \(\mathcal U^o\).
	
	Finally, the approximating minimax levels converge to the same value:
	\(\lambda_r^*\to\lambda^*\). There exist a subsequence, still denoted by
	\(r\), and pairs
	\((u_r^*,v_r^*)\in\mathcal S_r^o\times\mathcal S_r^o\) such that, after a
	positive rescaling of \(v_r^*\),
	\[
	u_r^*\to u^* \quad\text{strongly in }W,
	\qquad
	v_r^*\rightharpoonup v^* \quad\text{weakly in }W .
	\]
\end{thm}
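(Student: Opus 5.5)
The plan is to feed Theorem~\ref{Thm1} into (LC) and to use the identity \(\mathcal R(u,v)=\lambda\) for admissible solutions. Since \(c\le\lambda_r^*\le C\), I pass to a subsequence along which \(\lambda_r^*\to\hat\lambda^*\in[c,C]\); more precisely, I take a subsequence realizing \(\limsup_r\lambda_r^*\), so the same argument will also give full convergence. Because the \((PS)_e\)-condition holds at \(\hat\lambda^*\), Theorem~\ref{Thm1}\((2^\circ)\) supplies \(u^*\in\mathcal U^o\) and \(v^*\in\mathcal S\setminus\{0\}\) with \(u_r^*\to u^*\) strongly, \(v_r^*\rightharpoonup v^*\) weakly after rescaling, and
\[
\mathcal F(u^*,\hat\lambda^*)=0,\qquad \bigl\langle D_u\mathcal F(u^*,\hat\lambda^*)\xi,v^*\bigr\rangle=0\quad\forall\xi\in W .
\]
Testing the equation against any \(v\in\mathcal S\setminus\{0\}\) and using (D) gives \(\mathcal R(u^*,v)=\hat\lambda^*\) for all such \(v\). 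Hence \(\lambda(u^*)=\mathcal R(u^*,v^*)=\hat\lambda^*\), and in particular \(\hat\lambda^*\le\lambda^*\).

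For the reverse inequality \(\lambda^*\le\liminf_r\lambda_r^*\), I argue as follows. First, \(\lambda^*\ge\hat\lambda^*\ge c>-\infty\). Next I must show \(\lambda^*<\infty\), so that (LC) applies. I would obtain this as a by-product: if \(\lambda(u)>C\) for some \(u\in\mathcal U^o\), then I would like (LC) to yield \(\liminf\lambda_r(\mathcal I_ru)\ge\lambda(u)>C\ge\lambda_r^*\). By (c3), \(\mathcal I_ru\in\mathcal S_r^o\) for large \(r\), so \(\lambda_r(\mathcal I_ru)\le\lambda_r^*\), which is a contradiction. However, (LC) is stated under \(\lambda^*<\infty\), so the finiteness must come first. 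This is the main obstacle. My first attempt is to read (LC) as an assumption that implicitly includes finiteness, as the statement does (``Assume the standing assumptions and (LC)''); alternatively, I would apply the monotone comparison inequality above only to those \(u\) near the top. Given finiteness, take any \(u\) with \(\lambda(u)\ge\lambda^*-\delta\), where \(\delta<\delta_0\). By (LC) and (c3),
\[
\lambda^*-\delta\le\lambda(u)\le\liminf_r\lambda_r(\mathcal I_ru)\le\liminf_r\lambda_r^*.
\]
Letting \(\delta\to0\) gives \(\lambda^*\le\liminf_r\lambda_r^*\le\hat\lambda^*\le\lambda^*\). Thus \(\hat\lambda^*=\lambda^*\). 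Since this holds for the limsup subsequence as well, the whole sequence satisfies \(\lambda_r^*\to\lambda^*\). This yields \eqref{MBFThm}, with \(u^*\) attaining the supremum, together with the singular relations at \(\lambda^*\).

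For nonexistence, suppose \(u\in\mathcal U^o\) solves \eqref{Gf} at some \(\lambda\). Then \(\mathcal R(u,v)=\lambda\) for all \(v\), so \(\lambda=\lambda(u)\le\lambda^*\). Consequently no solutions exist for \(\lambda>\lambda^*\), and \((u^*,\lambda^*)\) is a one-sided saddle-node point, with any \(\varepsilon\) and \(V=W\). For maximality, let \((\hat u,\hat\lambda)\) be another one-sided saddle-node point. It is in particular a solution in \(\mathcal U^o\), so \(\hat\lambda\le\lambda^*\). The convergence statement for \((u_r^*,v_r^*)\) is exactly the output of Theorem~\ref{Thm1}\((2^\circ)\) along the chosen subsequence.
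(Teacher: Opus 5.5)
Your proof is correct and follows essentially the paper's argument: $\lambda^*\le\liminf_r\lambda_r^*$ from (LC) and (c3), the reverse bound from Theorem~\ref{Thm1}$(2^\circ)$ combined with (D), and nonexistence and maximality from $\mathcal R(u,\cdot)\equiv\lambda$ on admissible solutions. Your choice of the subsequence realizing $\limsup_r\lambda_r^*$ only streamlines the paper's ``every subsequence has a further convergent subsequence'' step, which applies the compactness argument twice; the paper, like you, tacitly treats $\lambda^*<+\infty$ as part of (LC).
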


\begin{rem}[On the verification of the abstract assumptions]
	The compactness and consistency assumptions used above are not intended to
	be merely formal hypotheses. Their role is to isolate, in the abstract
	setting, the analytic mechanisms needed for the minimax construction to
	select a genuine singular solution.
	
	In the applications below, these assumptions are verified for concrete
	elliptic problems and admissible finite-dimensional cones. In particular,
	compactness follows from elliptic estimates and growth conditions, boundary
	exclusion is obtained from comparison and discrete maximum principles,
	the extended \((PS)_e\)-condition is obtained using Picone-type
	inequalities, and the closedness condition \({\rm (CD)}\) is checked
	separately when the concave term has a singular derivative.
\end{rem}

\begin{rem}[Dual minimax formula]
	The orientation in \eqref{eq:lambda-star} is adapted to maximal one-sided
	saddle-node values, where admissible solutions cease to exist for larger
	parameters. The opposite orientation is described by
	\[
	\lambda_*
	:=
	\inf_{u\in\mathcal U^o}
	\sup_{v\in\mathcal S\setminus\{0\}}
	\mathcal R(u,v).
	\]
	Under the corresponding reversed inequalities and compactness assumptions,
	the same argument yields minimal one-sided saddle-node values, where
	admissible solutions cease to exist below the threshold. We omit the
	details, since they are completely parallel.
\end{rem}


\section{A posteriori certificates and perturbation bounds}
\label{sec6}

The minimax bifurcation formula also gives quantitative information about the
maximal one-sided saddle-node value. This section records two simple
consequences. First, the extended Rayleigh quotient provides computable
a posteriori intervals for \(\lambda^*\), without computing the principal
eigenvalue of the linearized operator
\[
D_u\mathcal F(u_\lambda,\lambda).
\]
Second, the same quotient gives one-sided bounds on the change of \(\lambda^*\)
under perturbations of the operator.

\subsection{A posteriori bounds from the minimax inequality}

Set
\[
\lambda(u):=
\inf_{v\in\mathcal S\setminus\{0\}}\mathcal R(u,v),
\qquad
u\in\mathcal U^o,
\]
and
\[
\Lambda(v):=
\sup_{u\in\mathcal U^o}\mathcal R(u,v),
\qquad
v\in\mathcal S\setminus\{0\}.
\]
Then
\[
\lambda^*
=
\sup_{u\in\mathcal U^o}\lambda(u),
\qquad
\Lambda^*
:=
\inf_{v\in\mathcal S\setminus\{0\}}\Lambda(v),
\]
whenever \(\Lambda^*\) is well defined, and the minimax inequality gives
\[
\lambda^*\le \Lambda^* .
\]

\begin{cor}[A posteriori parameter-margin certificate]
	\label{cor:parameter-margin}
	Assume that \(\mathcal R\) is well defined on
	\(\mathcal U^o\times(\mathcal S\setminus\{0\})\). Then, for every
	\(u\in\mathcal U^o\) and every \(v\in\mathcal S\setminus\{0\}\) with
	\(\Lambda(v)<+\infty\), one has
	\begin{equation}\label{eq:weak-gap-bound}
		\lambda(u)\le \lambda^*\le \Lambda(v).
	\end{equation}
	Equivalently,
	\[
	0\le \lambda^*-\lambda(u)\le \Lambda(v)-\lambda(u).
	\]
	
	In particular, if \(u_\lambda\in\mathcal U^o\) is an admissible solution of
	\(\mathcal F(u_\lambda,\lambda)=0\), then, for every
	\(v\in\mathcal S\setminus\{0\}\) with \(\Lambda(v)<+\infty\),
	\begin{equation}\label{eq:parameter-margin-certificate}
		0\le \lambda^*-\lambda\le \Lambda(v)-\lambda .
	\end{equation}
	Consequently, if \(\Lambda(v)-\lambda\le\varepsilon\) for some
	\(v\in\mathcal S\setminus\{0\}\), then
	\[
	0\le \lambda^*-\lambda\le\varepsilon .
	\]
\end{cor}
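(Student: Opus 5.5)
The argument is elementary: it only uses the definitions of \(\lambda(u)\), \(\lambda^*\) and \(\Lambda(v)\), together with the standard weak minimax inequality. No compactness or saddle-point structure from Theorem~\ref{Thm2} is needed.

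\emph{Lower bound.} Fix \(u\in\mathcal U^o\). Since \(\lambda^*=\sup_{u'\in\mathcal U^o}\lambda(u')\), we have \(\lambda(u)\le\lambda^*\) at once.

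\emph{Upper bound.} Fix \(v\in\mathcal S\setminus\{0\}\) with \(\Lambda(v)<+\infty\). For every \(u'\in\mathcal U^o\), \(\lambda(u')\) is an infimum over \(\mathcal S\setminus\{0\}\), so
\[
\lambda(u')\le\mathcal R(u',v)\le\sup_{w\in\mathcal U^o}\mathcal R(w,v)=\Lambda(v).
\]
Taking the supremum over \(u'\in\mathcal U^o\) gives \(\lambda^*\le\Lambda(v)\). This proves \eqref{eq:weak-gap-bound}. Subtracting \(\lambda(u)\) gives the equivalent form \(0\le\lambda^*-\lambda(u)\le\Lambda(v)-\lambda(u)\); when \(\lambda(u)=-\infty\) this is a trivial statement.

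\emph{Solution case.} Let \(u_\lambda\in\mathcal U^o\) solve \(\mathcal F(u_\lambda,\lambda)=0\). Pairing the equation with an arbitrary \(w\in\mathcal S\setminus\{0\}\) gives
\[
\langle\mathcal A(u_\lambda),w\rangle=\lambda\langle\mathcal G(u_\lambda),w\rangle .
\]
By (D), since \(\mathcal U^o\subset\mathcal S^o\), the denominator \(\langle\mathcal G(u_\lambda),w\rangle\) is positive. Dividing, \(\mathcal R(u_\lambda,w)=\lambda\) for all such \(w\), and hence \(\lambda(u_\lambda)=\lambda\). Substituting \(u=u_\lambda\) into the previous bound yields \eqref{eq:parameter-margin-certificate}. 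The final \(\varepsilon\)-statement follows immediately.

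The only delicate point is this last step: it needs the denominator to be nonzero for every \(w\in\mathcal S\setminus\{0\}\), and this is exactly what the standing assumption (D) provides on \(\mathcal S^o\times(\mathcal S\setminus\{0\})\).
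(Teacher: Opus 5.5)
Your proposal is correct and follows the paper's proof essentially verbatim. The lower bound comes from the definition of \(\lambda^*\) as a supremum. The upper bound comes from the chain \(\lambda(w)\le\mathcal R(w,v)\le\Lambda(v)\), followed by a supremum over \(w\in\mathcal U^o\). The identity \(\lambda(u_\lambda)=\lambda\) is obtained by pairing the equation with an arbitrary \(v\in\mathcal S\setminus\{0\}\). Your explicit appeal to {\rm (D)} for the nonvanishing denominator only makes precise what the paper leaves implicit in its hypothesis that \(\mathcal R\) is well defined on \(\mathcal U^o\times(\mathcal S\setminus\{0\})\).
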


\begin{proof}
	The lower bound in \eqref{eq:weak-gap-bound} follows directly from the
	definition of \(\lambda^*\). To prove the upper bound, fix
	\(v\in\mathcal S\setminus\{0\}\). For every \(w\in\mathcal U^o\),
	\[
	\lambda(w)
	=
	\inf_{\zeta\in\mathcal S\setminus\{0\}}\mathcal R(w,\zeta)
	\le
	\mathcal R(w,v)
	\le
	\sup_{z\in\mathcal U^o}\mathcal R(z,v)
	=
	\Lambda(v).
	\]
	Taking the supremum over \(w\in\mathcal U^o\), we obtain
	\(\lambda^*\le\Lambda(v)\).
	
	If \(u_\lambda\) solves \(\mathcal F(u_\lambda,\lambda)=0\), then
	\[
	\langle \mathcal A(u_\lambda),v\rangle
	=
	\lambda\langle\mathcal G(u_\lambda),v\rangle
	\qquad
	\forall v\in\mathcal S\setminus\{0\}.
	\]
	Hence \(\mathcal R(u_\lambda,v)=\lambda\) for all
	\(v\in\mathcal S\setminus\{0\}\), and so
	\(\lambda(u_\lambda)=\lambda\). Substituting
	\(u=u_\lambda\) into \eqref{eq:weak-gap-bound} gives
	\eqref{eq:parameter-margin-certificate}.
\end{proof}

Thus each pair \((u,v)\in\mathcal U^o\times(\mathcal S\setminus\{0\})\) with
\(\Lambda(v)<+\infty\) gives the certified localization
\[
\lambda^*\in[\lambda(u),\Lambda(v)].
\]
The width of this interval is
\[
{\rm gap}(u,v):=\Lambda(v)-\lambda(u).
\]
If the minimax equality \(\lambda^*=\Lambda^*\) is not known, this width may
also contain the duality gap \(\Lambda^*-\lambda^*\). Nevertheless, a small
value of \({\rm gap}(u,v)\) still gives a computable a posteriori certificate
for the critical parameter.

\begin{rem}
	Estimate \eqref{eq:parameter-margin-certificate} is particularly useful in
	threshold problems, such as pull-in instability, voltage collapse, and
	tipping phenomena; see, for instance,
	\cite{Ashwin2012,DobsonChiang,Dobson1992,PeleskoBernstein,Salazar,Scheffer2009}.
	In such problems one wants to estimate how close a given admissible
	solution \((u_\lambda,\lambda)\) is to the loss of solvability.
\end{rem}

\subsection{Perturbation estimate}

For an operator \(\mathcal A\), set
\[
\mathcal F_{\mathcal A}(u,\lambda)
:=
\mathcal A(u)-\lambda\mathcal G(u),
\qquad
\mathcal R_{\mathcal A}(u,v)
:=
\frac{\langle\mathcal A(u),v\rangle}
{\langle\mathcal G(u),v\rangle}.
\]

\begin{cor}[One-sided perturbation bound]
	\label{Th3}
	Let \(\mathcal P:\mathcal U^o\to W^*\), and assume that
	\[
	-\infty<
	\lambda^*_{\mathcal A+\mathcal P}
	:=
	\sup_{u\in\mathcal U^o}
	\inf_{v\in\mathcal S\setminus\{0\}}
	\mathcal R_{\mathcal A+\mathcal P}(u,v)
	<+\infty .
	\]
	If \(u^*_{\mathcal A}\in\mathcal U^o\) satisfies
	\[
	\lambda^*_{\mathcal A}
	=
	\inf_{v\in\mathcal S\setminus\{0\}}
	\mathcal R_{\mathcal A}(u^*_{\mathcal A},v),
	\]
	then
	\begin{equation}\label{eq:PertEST}
		\lambda^*_{\mathcal A+\mathcal P}
		\ge
		\lambda^*_{\mathcal A}
		+
		\inf_{v\in\mathcal S\setminus\{0\}}
		\frac{\langle \mathcal P(u^*_{\mathcal A}),v\rangle}
		{\langle\mathcal G(u^*_{\mathcal A}),v\rangle}.
	\end{equation}
\end{cor}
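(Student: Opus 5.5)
The bound will follow from a single test function in the sup defining \(\lambda^*_{\mathcal A+\mathcal P}\), combined with the splitting \(\mathcal R_{\mathcal A+\mathcal P}=\mathcal R_{\mathcal A}+\mathcal R_{\mathcal P}\) at a fixed \(u\). Write \(u^*:=u^*_{\mathcal A}\). Since \(u^*\in\mathcal U^o\), it is admissible in the supremum, so
\[
\lambda^*_{\mathcal A+\mathcal P}\ \ge\ \inf_{v\in\mathcal S\setminus\{0\}}\mathcal R_{\mathcal A+\mathcal P}(u^*,v).
\]
By (D), the denominator \(\langle\mathcal G(u^*),v\rangle\) is positive for every \(v\in\mathcal S\setminus\{0\}\), because \(\mathcal U^o\subset\mathcal S^o\). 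By linearity of the pairing in the first slot,
\[
\mathcal R_{\mathcal A+\mathcal P}(u^*,v)=\mathcal R_{\mathcal A}(u^*,v)+\frac{\langle\mathcal P(u^*),v\rangle}{\langle\mathcal G(u^*),v\rangle}.
\]

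Next I take infima over \(v\). For any functions \(f,g\) on the same set, \(\inf(f+g)\ge\inf f+\inf g\), since \(f(v)+g(v)\ge\inf f+\inf g\) pointwise. Applied here, this gives
\[
\inf_{v}\mathcal R_{\mathcal A+\mathcal P}(u^*,v)\ \ge\ \inf_{v}\mathcal R_{\mathcal A}(u^*,v)+\inf_{v}\frac{\langle\mathcal P(u^*),v\rangle}{\langle\mathcal G(u^*),v\rangle}.
\]
By hypothesis, the first term on the right equals \(\lambda^*_{\mathcal A}\). Chaining this with the first display yields \eqref{eq:PertEST}.

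The only step needing care is the arithmetic with possibly infinite values. If the \(\mathcal P\)-infimum equals \(-\infty\), the bound is trivial. Because \(\lambda^*_{\mathcal A+\mathcal P}<+\infty\), the left-hand side is finite. Since \(\lambda^*_{\mathcal A}\) is attained at \(u^*\), it is the infimum of finite numbers and is therefore not \(+\infty\). If it were \(-\infty\), the bound would again be trivial. So the sum on the right is never of the form \(+\infty-\infty\), and the inequality is meaningful in all cases. Beyond this check there is no substantive obstacle: the argument uses only the definition of the minimax level and positivity of the denominator.
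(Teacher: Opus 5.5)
Your proposal is correct and follows essentially the same argument as the paper: you test the outer supremum at \(u^*_{\mathcal A}\), split \(\mathcal R_{\mathcal A+\mathcal P}=\mathcal R_{\mathcal A}+\langle\mathcal P(u),v\rangle/\langle\mathcal G(u),v\rangle\), and apply \(\inf(a+b)\ge\inf a+\inf b\). Your extra check that no \(+\infty-\infty\) expression can occur is a harmless refinement that the paper leaves implicit.
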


\begin{proof}
	Since
	\[
	\mathcal R_{\mathcal A+\mathcal P}(u,v)
	=
	\mathcal R_{\mathcal A}(u,v)
	+
	\frac{\langle \mathcal P(u),v\rangle}
	{\langle\mathcal G(u),v\rangle},
	\]
	we test the outer supremum in the definition of
	\(\lambda^*_{\mathcal A+\mathcal P}\) at
	\(u=u^*_{\mathcal A}\). Using
	\(\inf(a+b)\ge\inf a+\inf b\), we obtain
	\[
	\lambda^*_{\mathcal A+\mathcal P}
	\ge
	\inf_{v\in\mathcal S\setminus\{0\}}
	\mathcal R_{\mathcal A+\mathcal P}(u^*_{\mathcal A},v)
	\ge
	\lambda^*_{\mathcal A}
	+
	\inf_{v\in\mathcal S\setminus\{0\}}
	\frac{\langle \mathcal P(u^*_{\mathcal A}),v\rangle}
	{\langle\mathcal G(u^*_{\mathcal A}),v\rangle}.
	\]
	This proves \eqref{eq:PertEST}.
\end{proof}


\section{Proof of Theorems~\ref{Thm1}, \ref{Thm2}}
\label{sec.13}

\begin{proof}[Proof of Theorem~\ref{Thm1}]
	We first record two elementary identities used below. Since
	\[
	\mathcal R(u,v)
	=
	\frac{\langle\mathcal A(u),v\rangle}
	{\langle\mathcal G(u),v\rangle},
	\qquad
	\mathcal F(u,\lambda)=\mathcal A(u)-\lambda\mathcal G(u),
	\]
	the quotient rule gives, whenever \(\langle\mathcal G(u),v\rangle\ne0\),
	\[
	D_u\mathcal R(u,v)(\xi)
	=
	\frac{
		\bigl\langle
		D_u\mathcal F(u,\mathcal R(u,v))\xi,v
		\bigr\rangle
	}{
		\langle\mathcal G(u),v\rangle
	},
	\qquad
	D_v\mathcal R(u,v)(\zeta)
	=
	\frac{
		\bigl\langle
		\mathcal F(u,\mathcal R(u,v)),\zeta
		\bigr\rangle
	}{
		\langle\mathcal G(u),v\rangle
	}.
	\]
	Thus, at a point where \(\mathcal R(u,v)=\lambda\), the condition
	\(D_u\mathcal R(u,v)=0\) is equivalent to
	\[
	\bigl\langle D_u\mathcal F(u,\lambda)\xi,v\bigr\rangle=0
	\qquad \forall \xi,
	\]
	whereas \(D_v\mathcal R(u,v)=0\) is equivalent to
	\[
	\bigl\langle \mathcal F(u,\lambda),\zeta\bigr\rangle=0
	\qquad \forall \zeta .
	\]
	
	We first prove \((1^\circ)\). Fix \(r\) sufficiently large and set
	\[
	\lambda_r(u):=
	\inf_{v\in\mathcal S_r\setminus\{0\}}\mathcal R(u,v),
	\qquad u\in\mathcal S_r^o .
	\]
	Let \(\{\eta_1,\ldots,\eta_{N_r}\}\) be the basis associated with
	\(\mathcal S_r\) in \({\rm (c2)}\). Then, by \({\rm (CQ)}\),
	\(b_i(u):=\langle\mathcal G(u),\eta_i\rangle>0\). Hence, for
	\(v=\sum_i v^i\eta_i\in\mathcal S_r\setminus\{0\}\), \(v^i\ge0\), we have
	\[
	\mathcal R(u,v)
	=
	\sum_{i=1}^{N_r}
	\frac{v^i b_i(u)}{\sum_j v^j b_j(u)}
	\mathcal R(u,\eta_i),
	\]
	and therefore
	\[
	\lambda_r(u)=\min_{1\le i\le N_r}\mathcal R(u,\eta_i).
	\]
	By the \(C^1\)-part of \({\rm (CQ)}\), the functions
	\(u\mapsto\mathcal R(u,\eta_i)\) are \(C^1\) on \(\mathcal S_r^o\).
	Hence \(\lambda_r\) is continuous on \(\mathcal S_r^o\).
	
	By \({\rm (h1)}\), \(\lambda_r\) attains its maximum. Indeed, any
	maximizing sequence eventually belongs to \(K_{r,\alpha_r}\), whose
	closure is compact and contained in \(\mathcal S_r^o\). Hence there exists
	\(u_r^*\in\mathcal S_r^o\) such that
	\[
	\lambda_r^*
	=
	\lambda_r(u_r^*)
	=
	\inf_{v\in\mathcal S_r\setminus\{0\}}\mathcal R(u_r^*,v).
	\]
	
	The point \((u_r^*,\lambda_r^*)\) solves the finite-dimensional constrained
	problem
	\[
	\text{maximize }\lambda
	\quad\text{over }(u,\lambda)\in\mathcal S_r^o\times\mathbb R,
	\qquad
	\lambda-\mathcal R(u,\eta_i)\le0,\quad i=1,\ldots,N_r .
	\]
	Since \(u_r^*\) is an interior point of \(\mathcal S_r^o\), no boundary
	multiplier in the \(u\)-variable appears. By the \(C^1\)-part of
	\({\rm (CQ)}\) and the Fritz John multiplier rule, there exist
	\(\mu_0\ge0\) and \(\mu_i\ge0\), not all zero, such that
	\[
	\mu_0=\sum_{i=1}^{N_r}\mu_i,\qquad
	\sum_{i=1}^{N_r}\mu_iD_u\mathcal R(u_r^*,\eta_i)=0,
	\qquad
	\mu_i\bigl(\lambda_r^*-\mathcal R(u_r^*,\eta_i)\bigr)=0 .
	\]
	Then \(\sum_i\mu_i>0\). After normalization, assume
	\(\sum_i\mu_i=1\), and set
	\[
	b_i:=\langle\mathcal G(u_r^*),\eta_i\rangle,
	\qquad
	v_r^*:=
	\sum_{i=1}^{N_r}\frac{\mu_i}{b_i}\eta_i .
	\]
	Then \(v_r^*\in\mathcal S_r\setminus\{0\}\). By the complementarity
	relations,
	\[
	\langle\mathcal A(u_r^*),v_r^*\rangle
	=
	\sum_i\mu_i\mathcal R(u_r^*,\eta_i)
	=
	\lambda_r^*,
	\qquad
	\langle\mathcal G(u_r^*),v_r^*\rangle=1,
	\]
	and hence
	\[
	\mathcal R(u_r^*,v_r^*)=\lambda_r^*.
	\]
	
	For every \(\xi\in W_r\), again using complementarity and the identity for
	\(D_u\mathcal R\) recorded above,
	\[
	0
	=
	\sum_i\mu_iD_u\mathcal R(u_r^*,\eta_i)(\xi)
	=
	\sum_i
	\frac{\mu_i}{b_i}
	\bigl\langle
	D_u\mathcal F(u_r^*,\lambda_r^*)\xi,\eta_i
	\bigr\rangle .
	\]
	Therefore
	\[
	\bigl\langle
	D_u\mathcal F(u_r^*,\lambda_r^*)\xi,v_r^*
	\bigr\rangle=0
	\qquad \forall \xi\in W_r .
	\]
	Since \(\mathcal R(u_r^*,v_r^*)=\lambda_r^*\), this is equivalent to
	\(D_u\mathcal R(u_r^*,v_r^*)(\xi)=0\) for all \(\xi\in W_r\). Hence, by
	\({\rm (h2)}\), \(v_r^*\in\mathcal S_r^o\).
	
	Thus \(v_r^*\) is an interior minimizer of
	\(v\mapsto\mathcal R(u_r^*,v)\). Consequently,
	\[
	D_v\mathcal R(u_r^*,v_r^*)(\zeta)=0
	\qquad \forall \zeta\in W_r .
	\]
	Since \(\mathcal R(u_r^*,v_r^*)=\lambda_r^*\), the identity for
	\(D_v\mathcal R\) gives
	\[
	\langle\mathcal F(u_r^*,\lambda_r^*),\zeta\rangle=0
	\qquad \forall \zeta\in W_r .
	\]
	
	Thus \((u_r^*,v_r^*)\) is a finite-dimensional saddle-point pair at the
	level \(\lambda_r^*\). Moreover,
	\[
	\lambda_r^*
	=
	\mathcal R(u_r^*,v_r^*)
	=
	\min_{v\in\mathcal S_r^o}\mathcal R(u_r^*,v)
	=
	\max_{u\in\mathcal S_r^o}
	\min_{v\in\mathcal S_r^o}\mathcal R(u,v).
	\]
	This proves \((1^\circ)\).
	
	We prove \((2^\circ)\). Since the Galerkin scheme is admissible,
	\({\rm (CD)}\) holds. Suppose that, along a subsequence,
	\(\lambda_r^*\to\hat\lambda^*\). By \((1^\circ)\),
	\[
	\mathcal R(u_r^*,v_r^*)=\lambda_r^*\to\hat\lambda^*,
	\qquad
	D_u\mathcal R(u_r^*,v_r^*)=0\ \text{on }W_r,
	\qquad
	D_v\mathcal R(u_r^*,v_r^*)=0\ \text{on }W_r .
	\]
	Thus \((u_r^*,v_r^*)\) is an extended \((PS)_e\)-sequence at level
	\(\hat\lambda^*\). By the extended \((PS)_e\)-condition, after a further
	subsequence and a positive rescaling of \(v_r^*\),
	\[
	u_r^*\to u^* \quad\text{strongly in }W,
	\qquad
	v_r^*\rightharpoonup v^* \quad\text{weakly in }W,
	\]
	with \(u^*\in\mathcal S^o\) and \(v^*\in\mathcal S\setminus\{0\}\).
	
	The rescaling of \(v_r^*\) preserves the identities involving
	\(D_u\mathcal F\), while the Galerkin equation is independent of \(v_r^*\).
	Let \(\zeta\in W\), and choose \(\zeta_r\in W_r\) such that
	\(\zeta_r\to\zeta\) strongly in \(W\). Since
	\[
	\langle\mathcal F(u_r^*,\lambda_r^*),\zeta_r\rangle=0,
	\]
	the sequential closedness of the residual gives
	\[
	\mathcal F(u^*,\hat\lambda^*)=0
	\quad\text{in }W^* .
	\]
	By \({\rm (R)}\), we then have \(u^*\in\mathcal U^o\).
	
	Let \(\xi\in W\), and choose \(\xi_r\in W_r\) such that
	\(\xi_r\to\xi\) strongly in \(W\). Since
	\[
	\bigl\langle
	D_u\mathcal F(u_r^*,\lambda_r^*)\xi_r,v_r^*
	\bigr\rangle=0,
	\]
	condition \({\rm (CD)}\) yields
	\[
	\bigl\langle
	D_u\mathcal F(u^*,\hat\lambda^*)\xi,v^*
	\bigr\rangle=0
	\qquad \forall \xi\in W .
	\]
	This proves \((2^\circ)\).
\end{proof}
\medskip
\begin{proof}[Proof of Theorem~\ref{Thm2}]
	Recall that
	\[
	\lambda^*
	=
	\sup_{u\in\mathcal U^o}
	\lambda(u),
	\qquad
	\lambda(u):=
	\inf_{v\in\mathcal S\setminus\{0\}}\mathcal R(u,v).
	\]
	We first prove the liminf estimate
	\begin{equation}\label{eq:liminf-Th2}
		\liminf_{r\to\infty}\lambda_r^*\ge \lambda^* .
	\end{equation}
	Let \(\varepsilon\in(0,\delta_0)\). By the definition of \(\lambda^*\),
	there exists \(u_\varepsilon\in\mathcal U^o\) such that
	\[
	\lambda(u_\varepsilon)>\lambda^*-\varepsilon .
	\]
	By \({\rm (c3)}\), for all sufficiently large \(r\),
	\(\mathcal I_ru_\varepsilon\in\mathcal S_r^o\). Hence
	\[
	\lambda_r^*
	\ge
	\lambda_r(\mathcal I_ru_\varepsilon)
	=
	\inf_{v\in\mathcal S_r\setminus\{0\}}
	\mathcal R(\mathcal I_ru_\varepsilon,v).
	\]
	Taking the lower limit and using \({\rm (LC)}\), we obtain
	\[
	\liminf_{r\to\infty}\lambda_r^*
	\ge
	\lambda(u_\varepsilon)
	>
	\lambda^*-\varepsilon .
	\]
	Letting \(\varepsilon\downarrow0\), we get \eqref{eq:liminf-Th2}.
	
	By the assumed bounds on \(\lambda_r^*\), every subsequence of
	\((\lambda_r^*)\) has a further convergent subsequence. Let
	\[
	\lambda_{r_j}^*\to\bar\lambda,
	\qquad
	\bar\lambda\in[c,C].
	\]
	Then \eqref{eq:liminf-Th2} gives \(\lambda^*\le\bar\lambda\).
	
	By Theorem~\ref{Thm1}\((1^\circ)\), for every sufficiently large \(j\)
	there exist Galerkin minimax pairs
	\[
	(u_{r_j}^*,v_{r_j}^*)
	\in
	\mathcal S_{r_j}^o\times\mathcal S_{r_j}^o .
	\]
	Since \(\mathcal R\) satisfies the extended \((PS)_e\)-condition at every
	level in \([c,C]\), Theorem~\ref{Thm1}\((2^\circ)\) yields, after passing
	to a further subsequence and rescaling the second component, a pair
	\[
	\bar u\in\mathcal U^o,
	\qquad
	\bar v\in\mathcal S\setminus\{0\},
	\]
	such that
	\[
	u_{r_j}^*\to\bar u \quad\text{strongly in }W,
	\qquad
	v_{r_j}^*\rightharpoonup\bar v \quad\text{weakly in }W,
	\]
	and
	\[
	\mathcal F(\bar u,\bar\lambda)=0
	\quad\text{in }W^*,
	\qquad
	\bigl\langle
	D_u\mathcal F(\bar u,\bar\lambda)\xi,\bar v
	\bigr\rangle=0
	\quad \forall \xi\in W .
	\]
	Since \(\bar u\in\mathcal U^o\subset\mathcal S^o\), condition
	\({\rm (D)}\) applies. Therefore
	\[
	\mathcal R(\bar u,v)=\bar\lambda
	\qquad
	\forall v\in\mathcal S\setminus\{0\},
	\]
	and hence
	\[
	\bar\lambda
	=
	\inf_{v\in\mathcal S\setminus\{0\}}\mathcal R(\bar u,v)
	\le
	\sup_{u\in\mathcal U^o}
	\inf_{v\in\mathcal S\setminus\{0\}}\mathcal R(u,v)
	=
	\lambda^* .
	\]
	Together with \(\lambda^*\le\bar\lambda\), this gives
	\(\bar\lambda=\lambda^*\).
	
	Thus every subsequence of \((\lambda_r^*)\) has a further subsequence
	converging to \(\lambda^*\). Consequently, \(\lambda_r^*\to\lambda^*\).
	In particular, \(\lambda^*\in[c,C]\), and hence \(\lambda^*>0\).
	
	Applying the preceding compactness argument to the convergent sequence
	\(\lambda_r^*\to\lambda^*\), we obtain, after passing to a subsequence and
	rescaling \(v_r^*\), a pair
	\[
	(u^*,v^*)\in\mathcal U^o\times(\mathcal S\setminus\{0\})
	\]
	such that
	\[
	u_r^*\to u^* \quad\text{strongly in }W,
	\qquad
	v_r^*\rightharpoonup v^* \quad\text{weakly in }W,
	\]
	and
	\[
	\mathcal F(u^*,\lambda^*)=0
	\quad\text{in }W^*,
	\qquad
	\bigl\langle
	D_u\mathcal F(u^*,\lambda^*)\xi,v^*
	\bigr\rangle=0
	\quad \forall \xi\in W .
	\]
	Again, since \(u^*\in\mathcal U^o\subset\mathcal S^o\), condition
	\({\rm (D)}\) gives
	\[
	\mathcal R(u^*,v)=\lambda^*
	\qquad
	\forall v\in\mathcal S\setminus\{0\}.
	\]
	In particular, \(\mathcal R(u^*,v^*)=\lambda^*\), and therefore
	\[
	\lambda^*
	=
	\mathcal R(u^*,v^*)
	=
	\inf_{v\in\mathcal S\setminus\{0\}}\mathcal R(u^*,v)
	=
	\sup_{u\in\mathcal U^o}
	\inf_{v\in\mathcal S\setminus\{0\}}
	\mathcal R(u,v).
	\]
	Thus the minimax bifurcation formula holds, and \((u^*,v^*)\) is a
	saddle-point solution pair at the level \(\lambda^*\).
	
	It remains to identify this value as the maximal one-sided threshold.
	Indeed, if \(u\in\mathcal U^o\) is a solution of \eqref{Gf} at some
	parameter \(\lambda\), then condition \({\rm (D)}\) gives
	\[
	\mathcal R(u,v)=\lambda
	\qquad
	\forall v\in\mathcal S\setminus\{0\}.
	\]
	Hence \(\lambda=\lambda(u)\le\lambda^*\). Therefore \eqref{Gf} has no
	solution in \(\mathcal U^o\) for any \(\lambda>\lambda^*\).
	
	Finally, every one-sided saddle-node point in \(\mathcal U^o\) is, by
	definition, a solution of \eqref{Gf}. Hence its parameter cannot exceed
	\(\lambda^*\). Since \((u^*,\lambda^*)\) is a singular solution and no
	admissible solutions exist above \(\lambda^*\), it is a maximal one-sided
	saddle-node point in \(\mathcal U^o\).
	
	The convergence of the Galerkin minimax levels and, along a subsequence,
	of the corresponding minimax pairs has already been proved.
\end{proof}

\section{Existence of singular solutions for elliptic systems}
\label{sec.4}

We now apply the abstract minimax principle to non-variational elliptic
systems. Let \(\Omega\subset\mathbb R^d\), \(d\ge1\), be a bounded domain,
sufficiently regular for the Hopf boundary principle to apply. We also assume
that \(\Omega\) admits an exhaustion from inside by polyhedral domains
\(\Omega_r\subset\Omega\), \(r=1,2,\ldots\), as in
Appendix~\ref{sec:appendix-FE}. The associated conforming triangulations are
chosen so that the discrete maximum principle \eqref{eq:impl} holds. The
index \(r\) includes both the choice of the subdomain and the mesh refinement.

Let \(m\ge1\). We consider the system
\begin{equation}
	\label{f}
	\left\{
	\begin{aligned}
		\mathcal L u^k
		&=\lambda (u^k)^{q-1}+f_k(x,u),
		&& x\in\Omega,\quad k=1,\ldots,m,\\
		u^k&\ge0,
		&& x\in\Omega,\quad k=1,\ldots,m,\\
		u^k&=0,
		&& x\in\partial\Omega,\quad k=1,\ldots,m,
	\end{aligned}
	\right.
\end{equation}
where \(u=(u^1,\ldots,u^m)\) and \(1<q<2\). For simplicity, the same scalar
elliptic operator acts in each component:
\begin{equation}\label{eq:mathL}
	\mathcal L w
	=
	-\frac{\partial}{\partial x_\alpha}
	\left(
	\sigma_{\alpha\beta}(x)
	\frac{\partial w}{\partial x_\beta}
	\right)
	+c(x)w ,
\end{equation}
where summation over repeated spatial indices is understood. We assume that
\(\sigma_{\alpha\beta},c\in C(\overline\Omega)\), that
\((\sigma_{\alpha\beta})\) is symmetric and uniformly elliptic, and that the
bilinear form
\begin{equation}\label{eq:awz}
	a(w,z):=
	\int_\Omega
	\left(
	\sigma_{\alpha\beta}(x)
	\frac{\partial w}{\partial x_\beta}
	\frac{\partial z}{\partial x_\alpha}
	+c(x)wz
	\right)\,dx
\end{equation}
is coercive on \(H_0^1(\Omega)\). Set
\[
\mathbb W:=(H_0^1(\Omega))^m,
\qquad
\mathbb L^p:=(L^p(\Omega))^m,
\]
and
\[
a_m(u,v):=\sum_{k=1}^m a(u^k,v^k),
\qquad
\langle u,v\rangle_m:=\sum_{k=1}^m\int_\Omega u^k v^k\,dx .
\]

We write
\[
\mathcal L u:=(\mathcal L u^1,\ldots,\mathcal L u^m)^T,
\qquad
f(u):=(f_1(\cdot,u),\ldots,f_m(\cdot,u))^T,
\]
and
\[
g(u):=\bigl((u^1)^{q-1},\ldots,(u^m)^{q-1}\bigr)^T .
\]
Then \eqref{f} has the abstract form
\[
\mathcal F(u,\lambda):=\mathcal L u-f(u)-\lambda g(u)=0.
\]

We impose the following assumptions. For each \(i=1,\ldots,m\),
\(f_i(\cdot,u)\in C(\overline\Omega)\),
\(f_i(x,\cdot)\in C^1(\mathbb R_+^m)\), and \(f_i(x,0)=0\). Moreover, there
exist \(2<\gamma_1\le\gamma_2<2^*\) and \(C>0\) such that, for
\(i,j=1,\ldots,m\),
\begin{align}
	0\le f_i(x,u)
	&\le
	C\bigl(|u|^{\gamma_1-1}+|u|^{\gamma_2-1}\bigr),
	\label{42}
	\\
	0\le f_{i,u^j}(x,u)
	&\le
	C\bigl(|u|^{\gamma_1-2}+|u|^{\gamma_2-2}\bigr),
	\label{422}
\end{align}
and, for \(i\ne j\),
\[
f_{i,u^j}(x,u)>0
\qquad
\text{for a.e. }x\in\Omega,\quad u\in(0,+\infty)^m .
\]

We also assume the following large-growth condition.

\begin{description}
	\item[\({\rm (LG)}\)]
	There exist \(R>0\) and \(\delta_0>0\) such that, denoting by
	\(\lambda_1\) the principal Dirichlet eigenvalue of \(\mathcal L\),
	\[
	\sum_{k=1}^m f_k(x,u)
	\ge
	(\lambda_1+\delta_0)\sum_{k=1}^m u^k
	\]
	for a.e. \(x\in\Omega\) and all \(u\in\mathbb R_+^m\) with \(|u|>R\).
\end{description}

Finally, we require the radial monotonicity condition
\begin{equation}
	\label{eq:radial-monotonicity-f}
	\bigl\langle f_u(u)u-f(u),v\bigr\rangle_m\ge0
	\qquad
	u\in\mathbb U^o,\quad v\in\mathbb S\setminus\{0\},
\end{equation}
and, for some \(\theta>1\), the Picone-type superlinearity condition
\begin{equation}
	\label{eq:picone-superlinearity-f}
	Q_f(x,u)\ge \theta\sum_{i=1}^m f_i(x,u)u^i
	\qquad
	\text{for a.e. }x\in\Omega,\quad u\in\mathbb R_+^m,
\end{equation}
where
\[
Q_f(x,u):=
\sum_{i=1}^m f_{i,u^i}(x,u)(u^i)^2
+
2\sum_{1\le i<j\le m}
\sqrt{f_{i,u^j}(x,u)f_{j,u^i}(x,u)}\,u^iu^j .
\]

We use the scalar cone
\[
S^o
:=
\left\{
w\in H_0^1(\Omega)\cap C(\overline\Omega):
w>0 \ \text{in }\Omega
\right\},
\qquad
S:=\overline{S^o}^{\,C(\overline\Omega)}\cap H_0^1(\Omega),
\]
and its componentwise version
\[
\mathbb S^o:=(S^o)^m,
\qquad
\mathbb S:=\overline{\mathbb S^o}^{\,\mathbb W}.
\]
The regular cone is \(\mathbb U^o:=(U^o)^m\), where
\[
U^o
:=
\left\{
w\in S^o:
\mathcal Lw\in L^\infty_{\rm loc}(\Omega),
\quad
\mathcal Lw\ge0
\text{ in }\mathcal D'(\Omega)
\right\}.
\]

By the Hopf-type lower bound, every \(w\in U^o\) satisfies
\[
w\ge c_w d_\Omega \quad\text{in }\Omega,
\qquad
d_\Omega(x):=\operatorname{dist}(x,\partial\Omega),
\]
with some \(c_w>0\). Hence
Lemma~\ref{lem:concave-differentiability} implies that
\(u\mapsto g(u)\) is differentiable on \(\mathbb U^o\), and
\(D_u\mathcal F(u,\lambda)\) is well defined for every
\(u\in\mathbb U^o\).

We now verify the abstract regularity condition \({\rm (R)}\) at
nonnegative levels, in the positive class considered here. Let
\(u\in\mathbb S^o\) be a weak solution of \eqref{f} with \(\lambda\ge0\).
Then, for each \(k=1,\ldots,m\),
\[
\mathcal L u^k
=
\lambda (u^k)^{q-1}+f_k(x,u)
\in L^\infty_{\rm loc}(\Omega),
\qquad
\mathcal L u^k\ge0
\quad\text{in }\mathcal D'(\Omega).
\]
The strong maximum principle and the Hopf boundary principle therefore give
\(u^k\in U^o\) for every \(k=1,\ldots,m\). Thus \(u\in\mathbb U^o\).

The extended Rayleigh quotient associated with \eqref{f} is
\[
\mathcal R(u,v):=
\frac{
	a_m(u,v)-\langle f(u),v\rangle_m
}{
	\langle g(u),v\rangle_m
},
\qquad
u\in\mathbb S^o,\quad v\in\mathbb S\setminus\{0\}.
\]
Its denominator is positive on this set. The corresponding
infinite-dimensional minimax level is
\[
\lambda^*
:=
\sup_{u\in\mathbb U^o}
\inf_{v\in\mathbb S\setminus\{0\}}
\mathcal R(u,v).
\]
We use the finite-dimensional Galerkin cones constructed in
Appendix~\ref{sec:appendix-FE}. Thus \(W_r\subset H_0^1(\Omega_r)\) is
extended by zero to \(\Omega\), so that \(W_r\subset H_0^1(\Omega)\), and
\(S_r^o\) is the corresponding positive nodal cone.
We use the product spaces
and cones
\[
\mathbb W_r:=(W_r)^m,
\qquad
\mathbb S_r^o:=(S_r^o)^m,
\qquad
\mathbb S_r:=(S_r)^m .
\]
The componentwise interpolation operator is denoted again by
\(\mathcal I_r\). The finite-dimensional minimax levels are
\[
\lambda_r^*
:=
\sup_{u\in\mathbb S_r^o}
\inf_{v\in\mathbb S_r\setminus\{0\}}
\mathcal R(u,v).
\]

\begin{lem}[Uniform bounds for the minimax levels]
	\label{lem:bounds-lambda}
	Assume the hypotheses stated above, including \({\rm (LG)}\), and suppose
	that the Galerkin cones and the discrete maximum-principle structure are those
	described in Appendix~\ref{sec:appendix-FE}.
	Then
	\[
	\lambda^*<+\infty .
	\]
	Moreover, there exist constants \(c,C>0\) and \(r_0\ge1\), independent of
	\(r\), such that
	\[
	c\le \lambda_r^*\le C,
	\qquad r\ge r_0 .
	\]
\end{lem}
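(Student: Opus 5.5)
The plan is to prove the upper bounds by testing the inner infimum against one fixed direction, and the lower bound by evaluating the quotient at one fixed Galerkin function.

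\emph{Upper bounds.} Let \(\varphi_1>0\) be the principal Dirichlet eigenfunction of \(\mathcal L\), and set \(v_1:=(\varphi_1,\ldots,\varphi_1)\in\mathbb S\setminus\{0\}\). Then \(a_m(u,v_1)=\lambda_1\sum_k\int_\Omega u^k\varphi_1\,dx\). By \({\rm (LG)}\), the growth bound \eqref{42} and continuity, there is \(C_R\) such that
\[
\sum_k f_k(x,u)\ge(\lambda_1+\delta_0)\sum_k u^k - C_R
\qquad\text{for all } u\in\mathbb R^m_+ .
\]
Hence
\[
\mathcal R(u,v_1)\le\frac{-\delta_0\sum_k\int u^k\varphi_1+C_R\int\varphi_1}{\sum_k\int (u^k)^{q-1}\varphi_1}.
\]
Put \(s:=\sum_k\int u^k\varphi_1\) and \(t:=\sum_k\int (u^k)^{q-1}\varphi_1\). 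If \(s\ge C_R\int\varphi_1/\delta_0\), the right-hand side is \(\le0\). Otherwise \(s\) is bounded, and we need a lower bound on \(t\) in terms of \(s\). Since \(0<q-1<1\), the bound \(\int w^{q-1}\varphi_1\ge (\int w\varphi_1)^{q-1}(\int\varphi_1)^{2-q}\) would go the wrong way in general; this is the main difficulty. Instead, split the numerator more carefully. The function \(u\mapsto\lambda_1\sum_k u^k-\sum_k f_k(x,u)\) is bounded above on \(\mathbb R^m_+\) by some \(M\) and is \(\le\lambda_1\sum_k u^k\). Using \(u^k\le \epsilon^{2-q}(u^k)^{q-1}\) where \(u^k\le\epsilon\), and the constant bound \(M\) where \(u^k>\epsilon\) (on that set \((u^k)^{q-1}\ge\epsilon^{q-1}\)), we get pointwise
\[
\lambda_1\sum u^k-\sum f_k\le C_\epsilon\sum (u^k)^{q-1}
\]
for a fixed \(\epsilon\). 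Integrating against \(\varphi_1\) gives \(\mathcal R(u,v_1)\le C\), uniformly in \(u\in\mathbb S^o\). This yields \(\lambda^*\le C\).

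For \(\lambda_r^*\), the same argument requires a test direction in \(\mathbb S_r\). I would use \(v_{1,r}:=\mathcal I_r v_1\), or the discrete principal eigenvector of \(\mathcal L\) on \(W_r\), which is positive by the discrete maximum principle. For the discrete eigenvector \(\varphi_{1,r}\) one has \(a(w,\varphi_{1,r})=\lambda_{1,r}\int w\varphi_{1,r}\) for \(w\in W_r\), with \(\lambda_{1,r}\to\lambda_1\) (from above, since \(W_r\subset H^1_0\)). Fix \(\delta_0/2\) of slack, and take \(r_0\) with \(\lambda_{1,r}\le\lambda_1+\delta_0/2\) for \(r\ge r_0\). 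The pointwise estimate above then holds with \(\lambda_{1,r}\) in place of \(\lambda_1\), with constants independent of \(r\). Hence \(\lambda_r^*\le C\).

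\emph{Lower bound.} Fix \(u_0\in\mathbb S_{r_0}^o\) small, say \(u_0=\tau\,\mathcal I_{r_0}(e,\ldots,e)\), where \(e\) solves \(\mathcal L e=1\); note \(W_{r_0}\subset W_r\) is needed, or else take \(u_0=\tau\mathcal I_r(\cdot)\) uniformly in \(r\). Since \(\lambda_r(u)=\min_i\mathcal R(u,\eta_i)\) over nodal basis functions, it suffices to bound, for each node \(i\),
\[
\frac{a(u_0^k,\eta_i)-\int f_k(u_0)\eta_i}{\int (u_0^k)^{q-1}\eta_i}\ge c .
\]
By the discrete maximum principle structure \eqref{eq:impl}, \(a(\mathcal I_r e,\eta_i)\ge c'\int\eta_i\) at interior nodes. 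The \(f\) term is \(O(\tau^{\gamma_1-1})\int\eta_i\), and the denominator is \(O(\tau^{q-1})\int\eta_i\). Since \(1<q<2<\gamma_1\), choosing \(\tau\) small makes the quotient at least \(c\,\tau^{2-q}>0\), uniformly in \(r\). This uniform nodal comparison is where the appendix's discrete maximum principle enters, and I expect it to be the main technical step.
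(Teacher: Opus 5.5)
\textbf{Upper bounds.} These match the paper: you use the same test directions $\boldsymbol\phi_1$ and $\boldsymbol\phi_{1,r}$, the same choice of $r_0$ with $\lambda_{1,r}\le\lambda_1+\delta_0/2$, and the same pointwise bound $\mu\sum_k u^k-\sum_k f_k(x,u)\le C\sum_k (u^k)^{q-1}$, uniform in $\mu\in[0,\lambda_1+\delta_0/2]$. Your $\epsilon$-splitting, with the case distinction made on $\max_k u^k$, is a correct proof of the boundedness of $H_\mu$, which the paper only asserts.

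\textbf{The gap is in the lower bound.} You need $a(\mathcal I_r e,\psi_i)\ge c'\int_\Omega\psi_i$ uniformly in $r$ and $i$, and this does not follow from \eqref{eq:impl}. The discrete maximum principle is an inverse-positivity statement: $A_r\vartheta\ge0$, $\vartheta\ne0$ implies $\vartheta>0$. It says nothing about the sign of $A_r$ applied to a prescribed vector such as the nodal values of $e$.

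What you actually need is pointwise consistency of the stiffness matrix on interpolants, and for $d\ge2$ this is not available on general shape-regular meshes. Write $a(\mathcal I_r e,\psi_i)=\int_\Omega\psi_i+a(\mathcal I_r e-e,\psi_i)$. Even granting $e\in W^{2,\infty}$, the interpolation estimate at nodes away from $\partial\Omega_r$ gives only $|a(\mathcal I_r e-e,\psi_i)|\le C\,h\,|e|_{H^2(\operatorname{supp}\psi_i)}\|\nabla\psi_i\|_{L^2}\le C h^{d}$, with $h$ the local mesh size. This is of the same order as $\int_\Omega\psi_i\sim h^d$. The defect is of lower order on translation-invariant meshes, and it vanishes identically for $-d^2/dx^2$ in $d=1$ (the identity used in Lemma~\ref{lem:LC-1d-system}). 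In general, however, its size relative to $\int_\Omega\psi_i$ is $O(1)$, and its sign depends on the local mesh geometry and on the Hessian of $e$. Moreover, with merely continuous $\sigma_{\alpha\beta}$ you do not have $e\in W^{2,\infty}$ at all.

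Your other variant, a fixed $u_0\in\mathbb S^o_{r_0}$, fails outright. Either $u_0\notin\mathbb S_r^o$, because its nodal values vanish in $\Omega_r\setminus\overline{\Omega_{r_0}}$. Or, for nested meshes and $\mathcal L=-\Delta$, $a(u_0^k,\psi_i)=0$ at fine nodes interior to a coarse element, so $\lambda_r(u_0)\le 0$.

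\textbf{The repair} is exactly the paper's choice: replace $\mathcal I_r e$ by the discrete torsion function $\omega_r\in W_r$, defined by $a(\omega_r,\psi_i)=\int_\Omega\psi_i$ for all $i$.
\begin{itemize}
\item The numerator identity $a_m(t\boldsymbol\omega_r,v)=t\sum_k\int_\Omega v^k$ then holds exactly for $v\in\mathbb S_r$.
\item \eqref{eq:impl} is used only for what it genuinely gives, namely $\omega_r\in S_r^o$.
\item The single uniform ingredient becomes the discrete $L^\infty$-stability bound $\|\omega_r\|_{L^\infty}\le C$, independent of $r$.
\end{itemize}
With this bound, $\langle g(t\boldsymbol\omega_r),v\rangle_m\le (Ct)^{q-1}\sum_k\int_\Omega v^k$ and $\langle f(t\boldsymbol\omega_r),v\rangle_m\le C(t^{\gamma_1-q}+t^{\gamma_2-q})\langle g(t\boldsymbol\omega_r),v\rangle_m$. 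Hence $\mathcal R(t\boldsymbol\omega_r,v)\ge c_1t^{2-q}-c_2t^{\gamma_1-q}-c_3t^{\gamma_2-q}$, and your small-scaling argument in $t$ finishes the proof as you intended.
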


\begin{proof}
	Let \((\lambda_1,\phi_1)\) be the principal Dirichlet eigenpair of
	\(\mathcal L\), with \(\phi_1>0\) in \(\Omega\). Since \(\mathcal L\) is
	symmetric,
	\[
	a(w,\phi_1)=\lambda_1\int_\Omega w\phi_1\,dx
	\qquad \forall w\in H_0^1(\Omega).
	\]
	Set \(\boldsymbol\phi_1:=(\phi_1,\ldots,\phi_1)\). Testing the inner
	infimum by \(\boldsymbol\phi_1\), we get
	\[
	\lambda^*
	\le
	\sup_{u\in\mathbb U^o}
	\frac{
		\displaystyle
		\int_\Omega
		\left(
		\lambda_1\sum_{k=1}^m u^k
		-
		\sum_{k=1}^m f_k(x,u)
		\right)\phi_1\,dx
	}{
		\displaystyle
		\int_\Omega
		\sum_{k=1}^m (u^k)^{q-1}\phi_1\,dx
	}.
	\]
	For \(\mu\ge0\), define
	\[
	H_\mu(x,u):=
	\frac{
		\mu\sum_{k=1}^m u^k-\sum_{k=1}^m f_k(x,u)
	}{
		\sum_{k=1}^m (u^k)^{q-1}
	},
	\qquad
	u\in\mathbb R_+^m\setminus\{0\}.
	\]
	By the growth assumptions on \(f\), \(1<q<2\), and \({\rm (LG)}\),
	\[
	\sup_{x\in\Omega,\ u\in\mathbb R_+^m\setminus\{0\}}
	H_{\lambda_1}(x,u)<+\infty .
	\]
	Hence \(\lambda^*<+\infty\).
	
	The discrete upper bound is identical. Let
	\((\lambda_{1,r},\phi_{1,r})\) be the discrete principal eigenpair. By
	admissibility,
	\[
	\lambda_{1,r}\to\lambda_1,
	\qquad
	\phi_{1,r}\in S_r^o .
	\]
	Thus, for all sufficiently large \(r\),
	\(\lambda_{1,r}\le\lambda_1+\delta_0/2\). The same pointwise bound for
	\(H_\mu\), uniformly for
	\(\mu\in[0,\lambda_1+\delta_0/2]\), and the test
	\(\boldsymbol\phi_{1,r}:=(\phi_{1,r},\ldots,\phi_{1,r})\in\mathbb S_r^o\)
	give
	\[
	\lambda_r^*\le C .
	\]
	
	It remains to prove the positive lower bound. Let \(\omega_r\in W_r\) be
	the discrete torsion function,
	\[
	a(\omega_r,\psi_i)=\int_\Omega\psi_i\,dx,
	\qquad i=1,\ldots,N_r .
	\]
	By the discrete maximum principle and the admissibility assumptions,
	\[
	\omega_r\in S_r^o,
	\qquad
	\|\omega_r\|_{L^\infty(\Omega)}\le C ,
	\]
	with \(C\) independent of \(r\). Set
	\(\boldsymbol\omega_r:=(\omega_r,\ldots,\omega_r)\). For
	\(0<t\le1\) and \(v\in\mathbb S_r\setminus\{0\}\), the definition of
	\(\omega_r\) gives
	\[
	\frac{a_m(t\boldsymbol\omega_r,v)}
	{\langle g(t\boldsymbol\omega_r),v\rangle_m}
	\ge c_1t^{2-q}.
	\]
	On the other hand, using
	\(1<q<2<\gamma_1\le\gamma_2\) and
	\(\|\omega_r\|_{L^\infty}\le C\),
	\[
	\frac{\langle f(t\boldsymbol\omega_r),v\rangle_m}
	{\langle g(t\boldsymbol\omega_r),v\rangle_m}
	\le
	c_2t^{\gamma_1-q}+c_3t^{\gamma_2-q}.
	\]
	Therefore
	\[
	\mathcal R(t\boldsymbol\omega_r,v)
	\ge
	c_1t^{2-q}
	-
	c_2t^{\gamma_1-q}
	-
	c_3t^{\gamma_2-q}.
	\]
	Since \(\gamma_1>2\), we fix \(t_0\in(0,1]\), independent of \(r\), such
	that the right-hand side is bounded below by some \(c>0\). Hence
	\[
	\lambda_r^*
	\ge
	\inf_{v\in\mathbb S_r\setminus\{0\}}
	\mathcal R(t_0\boldsymbol\omega_r,v)
	\ge c
	\]
	for all sufficiently large \(r\). The proof is complete.
\end{proof}

\begin{lem}\label{lem:PS-application}
	Under the assumptions stated above, suppose in addition that the discrete
	comparison estimate of Lemma~\ref{lem:discrete-comparison-sublinear}
	holds for the Galerkin operator \(A_r\). Then \(\mathcal R\) satisfies the
	extended \((PS)_e\)-condition at every level \(\lambda>0\).
\end{lem}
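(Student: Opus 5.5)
We must show: any sequence $(u_n,v_n)\in\mathbb S_{r_n}^o\times\mathbb S_{r_n}^o$ satisfying \eqref{PSEcond} at level $\lambda>0$ has, after a subsequence and a rescaling of $v_n$, $u_n\to\bar u$ strongly in $\mathbb W$ with $\bar u\in\mathbb S^o$, and $v_n\rightharpoonup\bar v\ne0$ weakly. By the identities at the start of the proof of Theorem~\ref{Thm1}, with $\lambda_n:=\mathcal R(u_n,v_n)\to\lambda$, the conditions mean that $u_n$ solves the Galerkin equation
\[
a_m(u_n,\zeta)=\langle f(u_n)+\lambda_n g(u_n),\zeta\rangle_m\qquad\forall\zeta\in\mathbb W_{r_n},
\]
and that $v_n$ solves the discrete adjoint linearized problem
\[
a_m(\xi,v_n)=\langle f_u(u_n)\xi+\lambda_n(q-1)g'(u_n)\xi,v_n\rangle_m\qquad\forall\xi\in\mathbb W_{r_n},
\]
with $v_n$ componentwise positive. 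The plan is: (i) an a priori $\mathbb W$-bound on $u_n$; (ii) strong convergence of $u_n$ via compactness of the subcritical nonlinearities; (iii) a positive lower bound preventing $u_n\to0$ and giving $\bar u\in\mathbb S^o$; (iv) normalization and nontrivial weak limit of $v_n$.

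Step (i) is the main obstacle, and this is where the Picone inequality and \eqref{eq:picone-superlinearity-f} enter. Testing the adjoint relation with $\xi=u_n$ and the Galerkin equation with $\zeta=v_n$ and subtracting eliminates $a_m(u_n,v_n)$ by symmetry of $a$:
\[
\langle f_u(u_n)u_n-f(u_n),v_n\rangle_m=(2-q)\lambda_n\langle g(u_n),v_n\rangle_m .
\]
This alone does not bound $u_n$, since $v_n$ is unknown. The plan is therefore to use a discrete Picone-type inequality (from the appendix) for the positive adjoint solution $v_n$, testing it with $u_n^2/v_n$ componentwise. I expect this to yield $a_m(u_n,u_n)\ge\int Q_f(x,u_n)\,dx+(q-1)\lambda_n\int\sum_k (u_n^k)^q\,dx$ up to discretization terms controlled by the discrete comparison estimate of Lemma~\ref{lem:discrete-comparison-sublinear}. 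The square-root cross terms in $Q_f$ come from the Cauchy–Schwarz step for the off-diagonal entries of $f_u$. Combining this with the equation tested by $u_n$, namely $a_m(u_n,u_n)=\int\sum_i f_i(u_n)u_n^i+\lambda_n\int\sum_k(u_n^k)^q$, and with \eqref{eq:picone-superlinearity-f}, should give
\[
(\theta-1)\int\sum_i f_i(x,u_n)u_n^i\,dx\le(2-q)\lambda_n\int\sum_k(u_n^k)^q\,dx .
\]
Since $f$ grows superquadratically by (LG) and $q<2$, this bounds $\int f(u_n)\cdot u_n$ and hence $\|u_n\|_{L^q}$. The equation tested by $u_n$ then bounds $\|u_n\|_{\mathbb W}$. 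Making the discrete Picone step rigorous, with $u_n^2/v_n$ not in $\mathbb W_{r_n}$, is the delicate point. I would handle it with nodal interpolation, using the nonnegativity of off-diagonal stiffness entries from the discrete maximum principle.

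For step (ii), extract $u_n\rightharpoonup\bar u$ in $\mathbb W$. Subcritical growth \eqref{42} and compact embeddings give $f(u_n)\to f(\bar u)$ and $g(u_n)\to g(\bar u)$ in $\mathbb W^*$. Test the equation with $u_n-P_{r_n}\bar u$, where $P_r$ is the Ritz projection converging strongly by density of $\bigcup W_r$; coercivity of $a_m$ then gives strong convergence. For step (iii), the torsion-type lower comparison of Lemma~\ref{lem:discrete-comparison-sublinear} gives $u_n\ge c\,\omega_{r_n}$ uniformly for $\lambda_n\ge\lambda/2>0$, because $\lambda_n(u)^{q-1}$ dominates near $0$. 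Hence $\bar u\ne0$ componentwise, and the strong maximum principle together with the cooperative coupling $f_{i,u^j}>0$ gives $\bar u\in\mathbb S^o$.

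For step (iv), rescale $v_n$ so that $\|v_n\|_{\mathbb W}=1$ and pass to $v_n\rightharpoonup\bar v$. To rule out $\bar v=0$, test the adjoint equation with $\xi=v_n$:
\[
1\le C\,\langle f_u(u_n)v_n,v_n\rangle_m+C\lambda_n\langle g'(u_n)v_n,v_n\rangle_m .
\]
The first term converges by compactness. The singular term is controlled by Lemma~\ref{lem:concave-differentiability}, the Hopf bound $u_n\ge c\,d_\Omega$ from step (iii), and a Hardy inequality, so it is compact in $v_n$ as well. If $\bar v=0$, the right-hand side would tend to $0$, a contradiction. Finally $\bar v\in\mathbb S$, since $\mathbb S$ is weakly closed.
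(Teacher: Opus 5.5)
\textbf{Gap in step (iv).} Your steps (i)--(iii) follow the paper's route. Step (i) is the adjoint identity tested with the nodal function of $u_n^2/v_n$, the discrete Picone inequality, and \eqref{eq:picone-superlinearity-f}; step (iii) is comparison with the discrete sublinear solution. Step (iv), however, has a genuine gap.

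For the singular term $\lambda_n\langle u_n^{q-2}v_n,v_n\rangle_m$ to vanish as $v_n\rightharpoonup0$, you need two things:
\begin{itemize}
\item a barrier $u_n^k\ge c\,d_{\Omega_{r_n}}$ with $c$ independent of $n$ (it cannot be $d_\Omega$, since $u_n=0$ outside $\Omega_{r_n}$);
\item a Hardy inequality on $\Omega_{r_n}$ that is uniform in $n$.
\end{itemize}
Lemma~\ref{lem:concave-differentiability} does not help, since it concerns a fixed $u\in\mathbb U^o$, not Galerkin iterates. Neither ingredient is a hypothesis of the lemma, and step (iii) does not supply the barrier. Lemma~\ref{lem:discrete-comparison-sublinear} compares $u_n$ with the discrete sublinear solution $w_n$, giving only $u_n\ge w_n\to w$ a.e., with no uniform control near $\partial\Omega_{r_n}$.

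Your shortcut, that $t\omega_{r_n}$ is a subsolution because $\lambda_nu^{q-1}$ dominates near $0$, fails at the boundary. Already in the continuous case $\mathcal L(t\omega)=t$, while $(t\omega)^{q-1}\to0$ at $\partial\Omega$. A lower bound by $c\,d$ needs the Hopf lemma, and uniformly in $n$ a discrete Hopf lemma. This is exactly the barrier that Appendix~D must impose as an extra hypothesis for (CD); the paper's proof of this lemma does not use it.

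\textbf{The missing idea.} A second Picone inequality removes the singular term instead of estimating it. Normalize $a_m(v_n,v_n)=1$ and test the \emph{Galerkin equation} (not the adjoint one) with the nodal function of $v_n^2/u_n$. Lemma~\ref{lem:picone-discrete} gives
\[
1\ge\lambda_n\langle u_n^{q-2}v_n,v_n\rangle_m+\Bigl\langle f(u_n),\frac{v_n^2}{u_n}\Bigr\rangle_m .
\]
Multiply this by $\chi\in(q-1,1)$ and subtract it from the adjoint identity tested with $\xi=v_n$. The singular term then carries the coefficient $(q-1-\chi)\lambda_n\le0$, and you are left with
\[
1-\chi\le\langle f_u(u_n)v_n,v_n\rangle_m .
\]
Only the subcritical compactness of $f_u$ is needed to conclude $\bar v\ne0$.

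\textbf{Smaller points.}
\begin{itemize}
\item In (i), (LG) is not superquadratic growth, and a bound on $\int f(u_n)\cdot u_n$ does not by itself bound $\|u_n\|_{\mathbb L^q}$. You do not need it: your inequality plus the energy identity directly give $a_m(u_n,u_n)\le C\lambda_n\|u_n\|_{\mathbb L^q}^q\le C\,a_m(u_n,u_n)^{q/2}$.
\item The discrete Picone lemma needs symmetry and \emph{nonpositive} off-diagonal entries, not nonnegative ones.
\item The lower-order discretization errors are controlled by convexity of $(a,b)\mapsto a^2/b$, not by the comparison estimate. On each $P_1$ element, the nodal interpolant of $u_n^2/v_n$ dominates $u_n^2/v_n$.
\item In (iii) you still need the limit equation and elliptic regularity to get $\bar u\in C(\overline\Omega)$, which is part of the definition of $\mathbb S^o$.
\end{itemize}
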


\begin{proof}
	Let \(r_n\to\infty\), and let
	\((u_n,v_n)\in\mathbb S_{r_n}^o\times\mathbb S_{r_n}^o\) be an extended
	\((PS)_e\)-sequence at a level \(\lambda>0\). Set
	\[
	\lambda_n:=\mathcal R(u_n,v_n)\to\lambda .
	\]
	By the homogeneity of \(\mathcal R\) with respect to the second variable,
	we normalize \(a_m(v_n,v_n)=1\). The Galerkin criticality relations,
	equivalent to \(D_u\mathcal R(u_n,v_n)=D_v\mathcal R(u_n,v_n)=0\) on
	\(\mathbb W_{r_n}\), are
	\[
	\langle\mathcal F(u_n,\lambda_n),\zeta\rangle=0
	\quad \forall \zeta\in\mathbb W_{r_n},
	\qquad
	\bigl\langle D_u\mathcal F(u_n,\lambda_n)\xi,v_n\bigr\rangle=0
	\quad \forall \xi\in\mathbb W_{r_n}.
	\]
	Testing the first identity by \(u_n\), we get
	\begin{equation}\label{eq:energy-ps-g}
		a_m(u_n,u_n)
		=
		\lambda_n\langle g(u_n),u_n\rangle_m
		+
		\langle f(u_n),u_n\rangle_m .
	\end{equation}
	
	We first prove that \((u_n)\) is bounded in \(\mathbb W\). Let
	\(\theta_n\in\mathbb W_{r_n}\) be the nodal function defined componentwise by
	\[
	(\theta_n)_i^k=\frac{(u_{n,i}^k)^2}{v_{n,i}^k}.
	\]
	Since \(u_n,v_n\in\mathbb S_{r_n}^o\), this function is well defined. Taking
	\(\xi=\theta_n\) in the adjoint Galerkin identity, using the componentwise
	discrete Picone inequality, and applying the arithmetic--geometric mean
	inequality to the cooperative off-diagonal terms, we obtain
	\[
	a_m(u_n,u_n)
	\ge
	(q-1)\lambda_n\langle g(u_n),u_n\rangle_m
	+
	\int_\Omega Q_f(x,u_n)\,dx .
	\]
	By \eqref{eq:picone-superlinearity-f}, for some \(\eta>1\),
	\[
	a_m(u_n,u_n)
	\ge
	(q-1)\lambda_n\langle g(u_n),u_n\rangle_m
	+
	\eta\langle f(u_n),u_n\rangle_m .
	\]
	Combining this estimate with \eqref{eq:energy-ps-g}, we find
	\[
	(\eta-1)a_m(u_n,u_n)
	\le
	(\eta-q+1)\lambda_n\langle g(u_n),u_n\rangle_m .
	\]
	Since
	\[
	\langle g(u_n),u_n\rangle_m
	=
	\|u_n\|_{\mathbb L^q}^q
	\le
	Ca_m(u_n,u_n)^{q/2},
	\]
	and \(q<2\), the sequence \((u_n)\) is bounded in \(\mathbb W\).
	
	By the normalization of \(v_n\) and the coercivity of \(a_m\), the sequence
	\((v_n)\) is also bounded in \(\mathbb W\). Hence, after passing to a
	subsequence,
	\[
	u_n\rightharpoonup u^*,
	\qquad
	v_n\rightharpoonup v^*
	\quad\text{weakly in }\mathbb W,
	\]
	and
	\[
	u_n\to u^*,
	\qquad
	v_n\to v^*
	\quad\text{strongly in }\mathbb L^\beta,\quad 1\le\beta<2^*,
	\]
	and a.e. in \(\Omega\). Since \(\mathbb S\) is weakly closed and
	\(\mathbb S_{r_n}\subset\mathbb S\), we have \(u^*,v^*\in\mathbb S\).
	
	We now pass to the limit in the Galerkin equation. Let
	\(\zeta\in\mathbb W\). Choose \(\zeta_n\in\mathbb W_{r_n}\) such that
	\[
	\zeta_n\to\zeta
	\qquad\text{strongly in }\mathbb W .
	\]
	Then
	\[
	\langle\mathcal F(u_n,\lambda_n),\zeta_n\rangle=0.
	\]
	Using the weak convergence of \(u_n\), the strong convergence in
	subcritical Lebesgue spaces, and the growth assumptions on \(f\), we obtain
	\[
	\langle\mathcal F(u^*,\lambda),\zeta\rangle=0
	\qquad \forall \zeta\in\mathbb W .
	\]
	
	We next show that \(u^*\in\mathbb S^o\). Since
	\(\lambda_n\to\lambda>0\), for all large \(n\),
	\[
	\mathcal L u_n^k
	=
	\lambda_n(u_n^k)^{q-1}+f_k(x,u_n)
	\ge
	\frac{\lambda}{2}(u_n^k)^{q-1}
	\]
	in the discrete weak sense, for \(k=1,\ldots,m\). By
	Lemma~\ref{lem:discrete-comparison-sublinear}, applied componentwise,
	\[
	u_n^k\ge w_n^k,
	\]
	where \(w_n^k\in S_{r_n}^o\) is the Galerkin solution of the scalar
	sublinear problem
	\[
	\mathcal L w_n^k=\frac{\lambda}{2}(w_n^k)^{q-1}.
	\]
	The standard convergence of Galerkin solutions for this scalar sublinear
	problem gives
	\[
	w_n^k\to w
	\quad\text{strongly in }H_0^1(\Omega)
	\quad\text{and a.e. in }\Omega,
	\]
	where \(w\) is the positive weak solution of
	\[
	\mathcal L w=\frac{\lambda}{2}w^{q-1},
	\qquad
	w\in H_0^1(\Omega),
	\qquad
	w>0 \quad\text{in }\Omega .
	\]
	Passing to the limit in \(u_n^k\ge w_n^k\), we obtain
	\[
	u^{*,k}\ge w>0
	\quad\text{a.e. in }\Omega,
	\qquad k=1,\ldots,m .
	\]
	Since \(u^*\) solves the system and the right-hand side has subcritical
	growth, standard bootstrap/Moser estimates and boundary H\"older regularity
	imply \(u^*\in(C(\overline\Omega))^m\). Hence
	\(u^{*,k}>0\) in \(\Omega\), \(k=1,\ldots,m\), and therefore
	\(u^*\in\mathbb S^o\).
	
	It remains to strengthen the convergence of \(u_n\) and to prove that
	\(v^*\ne0\). Testing the limit equation by \(u^*\), we get
	\[
	a_m(u^*,u^*)
	=
	\lambda\langle g(u^*),u^*\rangle_m
	+
	\langle f(u^*),u^*\rangle_m .
	\]
	Comparing this identity with \eqref{eq:energy-ps-g}, and using the compact
	convergence of the nonlinear terms, we obtain
	\[
	a_m(u_n,u_n)\to a_m(u^*,u^*) .
	\]
	Together with the weak convergence \(u_n\rightharpoonup u^*\), the
	coercivity of \(a_m\) gives
	\[
	u_n\to u^*
	\qquad\text{strongly in }\mathbb W .
	\]
	
	Finally, taking \(\xi=v_n\) in the adjoint Galerkin identity and using the
	normalization \(a_m(v_n,v_n)=1\), we obtain
	\[
	1
	=
	(q-1)\lambda_n
	\left\langle u_n^{q-2}v_n,v_n\right\rangle_m
	+
	\left\langle f_u(u_n)v_n,v_n\right\rangle_m .
	\]
	On the other hand, testing the Galerkin equation by the nodal function with
	values
	\[
	\frac{(v_{n,i}^k)^2}{u_{n,i}^k}
	\]
	and using the componentwise discrete Picone inequality yields
	\[
	1
	\ge
	\lambda_n
	\left\langle u_n^{q-2}v_n,v_n\right\rangle_m
	+
	\left\langle f(u_n),\frac{v_n^2}{u_n}\right\rangle_m .
	\]
	Choose \(\chi\in(q-1,1)\). Multiplying the last inequality by \(\chi\) and
	subtracting it from the preceding identity gives
	\[
	1-\chi
	\le
	\left\langle f_u(u_n)v_n,v_n\right\rangle_m ,
	\]
	because the remaining terms are nonpositive:
	\[
	(q-1-\chi)\lambda_n
	\left\langle u_n^{q-2}v_n,v_n\right\rangle_m\le0,
	\qquad
	-\chi
	\left\langle f(u_n),\frac{v_n^2}{u_n}\right\rangle_m\le0 .
	\]
	If \(v^*=0\), then by compactness
	\[
	v_n\to0
	\quad\text{strongly in }\mathbb L^\beta,\qquad 1\le\beta<2^* .
	\]
	Since \((u_n)\) is bounded in \(\mathbb W\) and \(f_u\) has subcritical
	growth, \(\left\langle f_u(u_n)v_n,v_n\right\rangle_m\to0\),
	contradicting the previous estimate. Hence \(v^*\ne0\).
	
	Consequently, after passing to a subsequence,
	\[
	u_n\to u^*
	\quad\text{strongly in }\mathbb W,
	\qquad
	v_n\rightharpoonup v^*
	\quad\text{weakly in }\mathbb W,
	\]
	with \(u^*\in\mathbb S^o\), \(v^*\in\mathbb S\setminus\{0\}\). This is
	precisely the extended \((PS)_e\)-condition at the level \(\lambda>0\).
\end{proof}

\begin{lem}\label{lem:verification-system}
	Under the assumptions stated above, the extended Rayleigh quotient
	\(\mathcal R\) satisfies \({\rm (D)}\), the Galerkin scheme is admissible,
	and condition \({\rm (H)}\) holds for all sufficiently large \(r\).
\end{lem}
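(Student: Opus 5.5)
The plan is to verify the conditions in the order in which the abstract framework uses them: first (D), then the three parts of admissibility, namely (C), (CQ) and (CD), and finally (h1) and (h2).

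\textbf{Condition (D).} Let \(u\in\mathbb S^o\) and \(v\in\mathbb S\setminus\{0\}\). Each \(u^k\) is positive in \(\Omega\), so \((u^k)^{q-1}>0\). Each \(v^k\) is nonnegative, and at least one component is nonzero on a set of positive measure. Hence
\[
\langle g(u),v\rangle_m=\sum_k\int_\Omega (u^k)^{q-1}v^k\,dx>0 .
\]

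\textbf{Condition (C).} Parts (c1) and (c2) follow from the construction of the positive nodal cones in Appendix~\ref{sec:appendix-FE}. The relevant facts are:
\begin{itemize}
\item \(S_r^o\) is the set of functions in \(W_r\) whose nodal values at interior nodes are positive;
\item the nodal basis \(\{\psi_i\}\) provides the polyhedral description;
\item extension by zero gives \(S_r\subset S\).
\end{itemize}
For (c3), take \(w\in U^o\). By the Hopf bound, \(w\ge c_wd_\Omega>0\) at every interior node of \(\Omega_r\subset\Omega\). The nodal interpolant (of the restriction to \(\overline\Omega_r\), with zero boundary values) therefore lies in \(S_r^o\).

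\textbf{Condition (CQ).} Positivity on \(\mathbb S_r^o\times(\mathbb S_r\setminus\{0\})\) is the same argument as for (D). For the \(C^1\) part, note that \(u\in\mathbb S_r^o\) has nodal values bounded away from zero in each cell interior. The only delicate case is \(u^{q-1}\) near \(\partial\Omega_r\). There \(u\) vanishes linearly in the piecewise-linear sense, so \(u^{q-2}\) behaves like \(d^{q-2}\), which is integrable since \(q>1\). Differentiability then follows by dominated convergence, in the spirit of Lemma~\ref{lem:concave-differentiability}.

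\textbf{Condition (CD).} The terms \(a_m\) and \(f_u\) pass to the limit by strong and weak convergence together with the subcritical growth bounds \eqref{422}. The singular term is
\[
(q-1)\int u_n^{q-2}\xi_nv_n\,dx .
\]
I will handle it by Vitali's theorem, using uniform integrability coming from a lower bound \(u_n\ge c\,d_\Omega\) along admissible sequences converging to \(u\in\mathbb U^o\), combined with Hardy's inequality to control \(\xi_n/d_\Omega\) and \(v_n\) in \(L^2\). This is where I expect the main technical difficulty. The uniform Hopf-type bound for \(u_n\) is not automatic for arbitrary sequences in \(\mathbb S_{r_n}^o\). My first attempt will be to use a uniform discrete comparison, as in Lemma~\ref{lem:discrete-comparison-sublinear}. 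If that fails, I will interpret (CD) in the relative sense allowed by the paper, that is, along Galerkin solution sequences, which are the only sequences to which it is applied in Theorem~\ref{Thm1}.

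\textbf{Condition (h1).} I will use the bounds of Lemma~\ref{lem:bounds-lambda} and fix \(\alpha_r\in(0,\lambda_r^*)\), for example \(\alpha_r=c/2\). Testing \(\lambda_r(u)\ge\alpha_r\) with the discrete eigenfunction \(\boldsymbol\phi_{1,r}\) and applying (LG) gives a uniform \(L^\infty\)-type bound on the superlevel set in the finite-dimensional space, hence boundedness. Next I will test with the nodal basis functions \(\eta_i\): the inequality \(\mathcal R(u,\eta_i)\ge\alpha_r\) means \(a(u^k,\psi_i)-\ldots\ge\alpha_r\int (u^k)^{q-1}\psi_i\). This lets me exclude nodal values tending to zero: if \(u^k_i\to0\) at some node, then, using \(\alpha_r>0\), the sublinear term dominates and the discrete maximum principle \eqref{eq:impl} forces a contradiction. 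This gives a positive lower bound on the nodal values, so the closure of \(K_{r,\alpha_r}\) stays inside the open cone.

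\textbf{Condition (h2).} Suppose \(D_u\mathcal R(u,v)=0\) on \(\mathbb W_r\) with \(v\in\mathbb S_r\setminus\{0\}\). Then \(v\) solves the adjoint discrete linear system
\[
A_rv-\lambda(q-1)\,\mathrm{diag}(u^{q-2})v-f_u(u)^Tv=0 .
\]
The off-diagonal coupling \(f_{i,u^j}>0\) makes this system cooperative and irreducible across components, and the discrete maximum principle gives irreducibility within each component. A discrete strong maximum principle for nonnegative solutions of such an M-matrix-type system then shows that a nonnegative nonzero \(v\) is strictly positive at every node. Hence \(v\in\mathbb S_r^o\).
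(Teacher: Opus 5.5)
Your proposal is correct and follows essentially the paper's proof item by item. (D) and the denominator part of (CQ) come from positivity of the concave term. (CD) holds only in the relative sense, along Galerkin sequences that carry a discrete barrier obtained by sublinear comparison. (h1) follows by testing with the discrete principal eigenfunction under (LG), and with nodal generators using the $M$-matrix structure together with the small-amplitude dominance of $u^{q-1}$ over the linear part. (h2) follows from the sign structure of the adjoint Galerkin system plus strict cooperativity; you merely package it as one strong maximum principle for the coupled system, instead of the paper's two steps (componentwise \eqref{eq:impl} for $A_r^T$, then exclusion of a vanishing component).

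Two small corrections. First, the barrier must be $u_n\ge c\,d_{\Omega_{r_n}}$ with a uniform Hardy inequality on $\Omega_{r_n}$; a bound by $c\,d_\Omega$ is false in general, since $u_n$ vanishes outside $\Omega_{r_n}$. Second, in (h2) the concave contribution is a nondiagonal mass-type matrix and the adjoint matrix is singular, so the principle you need is the nodewise one for irreducible $Z$-matrices, and its off-diagonal sign relies on $\lambda_r^*\ge0$ from Lemma~\ref{lem:bounds-lambda}.
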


\begin{proof}
	The denominator condition \({\rm (D)}\) follows from the componentwise
	positivity of \(g(u)\) for \(u\in\mathbb S^o\) and from
	\(v\ge0\), \(v\ne0\), for \(v\in\mathbb S\setminus\{0\}\).
	
	We next verify the Galerkin assumptions. Condition \({\rm (C)}\) is precisely
	the componentwise version of the nodal cone construction in
	Appendix~\ref{sec:appendix-FE}. The quotient-compatibility condition
	\({\rm (CQ)}\) follows from the same denominator positivity on
	\(\mathbb S_r^o\times(\mathbb S_r\setminus\{0\})\), and from the
	finite-dimensional \(C^1\)-regularity of the restrictions of
	\(\mathcal R\) on \(\mathbb S_r^o\times\mathbb S_r^o\). The linearized
	closedness condition \({\rm (CD)}\) is verified in
	Appendix~\ref{sec:appendix-CD}; the only singular term is the derivative of
	the concave part, and its passage to the limit follows from the discrete
	barrier estimate established for the Galerkin \((PS)_e\)-sequences.

	We verify \({\rm (h1)}\). Fix \(r\) sufficiently large and choose
	\(0<\alpha<\lambda_r^*\). For \(u\in\mathbb S_r^o\), set
	\[
	\lambda_r(u):=
	\inf_{v\in\mathbb S_r\setminus\{0\}}\mathcal R(u,v).
	\]
	Since, for fixed \(u\), the quotient is linear-fractional in \(v\) with
	positive denominator, and since \(\mathbb S_r\) is generated by the
	componentwise nodal basis, we have
	\[
	\lambda_r(u)
	=
	\min_{1\le i\le N_r,\ 1\le k\le m}
	\mathcal R(u,\bar\psi_i^k).
	\]
	Hence \(\lambda_r\) is continuous on \(\mathbb S_r^o\). Let
	\[
	K_{r,\alpha}:=
	\{u\in\mathbb S_r^o:\lambda_r(u)\ge\alpha\}.
	\]
	We show that \(K_{r,\alpha}\Subset\mathbb S_r^o\). The estimates below are
	finite-dimensional and use the equivalence of norms in \(\mathbb W_r\).
	
	First, \(K_{r,\alpha}\) stays away from the origin. Indeed, if
	\(u_n\in K_{r,\alpha}\) and \(u_n\to0\) in \(\mathbb W_r\), then, choosing a
	nodal generator corresponding to a largest nodal coefficient of \(u_n\),
	one obtains
	\[
	\lambda_r(u_n)\le C\|u_n\|_{\mathbb W_r}^{\,2-q}+o(1)\to0,
	\]
	because \(1<q<2\), contradicting \(\lambda_r(u_n)\ge\alpha\).
	
	Next, \(K_{r,\alpha}\) is bounded. Suppose, to the contrary, that
	\(\|u_n\|_{\mathbb W_r}\to\infty\). Set
	\[
	\rho_n:=\|u_n\|_{\mathbb W_r},
	\qquad
	z_n:=\frac{u_n}{\rho_n}.
	\]
	Passing to a subsequence, \(z_n\to z_0\in\mathbb S_r\) with
	\(\|z_0\|_{\mathbb W_r}=1\). Testing the inner infimum by the
	componentwise discrete principal eigenfunction and using the large-growth
	condition \({\rm (LG)}\), we get
	\[
	\limsup_{n\to\infty}\lambda_r(u_n)\le0,
	\]
	again contradicting \(u_n\in K_{r,\alpha}\).
	
	It remains to exclude convergence to the boundary of \(\mathbb S_r^o\).
	Let \(u_n\in K_{r,\alpha}\) and suppose that
	\(u_n\to u_0\in\partial\mathbb S_r^o\). The case \(u_0=0\) has already
	been excluded. Hence at least one component of \(u_0\) is nonzero.
	
	If a nonzero component \(u_0^k\) has a zero nodal coefficient, then, by the
	irreducible \(M\)-matrix sign structure, there is a nodal generator
	\(\bar\psi_i^k\) such that \(a(u_0^k,\psi_i)<0\). Consequently,
	\[
	\limsup_{n\to\infty}
	\mathcal R(u_n,\bar\psi_i^k)<0,
	\]
	and therefore \(\limsup_{n\to\infty}\lambda_r(u_n)\le0\), which contradicts
	\(\lambda_r(u_n)\ge\alpha\).
	
	It remains only to consider the case where an entire component
	\(u_0^\ell\equiv0\). Since \(u_0\ne0\), some other component is nonzero.
	Testing the inner infimum in the \(\ell\)-th component by a nodal generator
	corresponding to a largest nodal coefficient of \(u_n^\ell\), and using
	\(f_\ell\ge0\), gives
	\[
	\limsup_{n\to\infty}\lambda_r(u_n)\le0.
	\]
	This is again impossible. Hence \(K_{r,\alpha}\Subset\mathbb S_r^o\), and
	\({\rm (h1)}\) follows.
	
	We now verify \({\rm (h2)}\). Let
	\(u\in\mathbb S_r^o\), \(v\in\mathbb S_r\setminus\{0\}\), and assume that
	\(\mathcal R(u,v)=\lambda_r^*<+\infty\) and
	\[
	D_u\mathcal R(u,v)(\xi)=0
	\qquad \forall \xi\in\mathbb W_r .
	\]
	Then, by Lemma~\ref{lem:bounds-lambda}, \(\lambda_r^*\ge0\) for all
	sufficiently large \(r\), and hence
	\[
	a_m(\xi,v)
	=
	\langle f_u(u)\xi,v\rangle_m
	+
	\lambda_r^*(q-1)\langle u^{q-2}\xi,v\rangle_m
	\qquad \forall \xi\in\mathbb W_r .
	\]
	In particular, \(a_m(\xi,v)\ge0\) for all \(\xi\in\mathbb S_r\). Testing
	this inequality on the componentwise nodal generators gives
	\[
	A_r^T\bar v^k\ge0,
	\qquad k=1,\ldots,m,
	\]
	where \(\bar v^k\) is the vector of nodal coefficients of \(v^k\). Since
	\(A_r^T\) is again an irreducible nonsingular \(M\)-matrix, each component
	\(v^k\) is either identically zero or belongs to \(S_r^o\).
	
	We show that zero components are impossible. Suppose that
	\(v^\ell\equiv0\). Since \(v\ne0\), there exists \(k_0\ne\ell\) such that
	\(v^{k_0}\in S_r^o\). Taking \(\xi\) with only the \(\ell\)-th component
	nonzero in the stationarity identity, we obtain
	\[
	0
	=
	\sum_{i=1}^m
	\int_\Omega f_{i,u^\ell}(x,u)\,\xi^\ell v^i\,dx .
	\]
	All terms are nonnegative. However, by strict cooperativity, the term with
	\(i=k_0\) is strictly positive for a suitable \(\xi^\ell\in S_r\). This
	contradiction shows that no component of \(v\) can vanish. Therefore
	\(v^k\in S_r^o\) for every \(k=1,\ldots,m\), and hence \(v\in\mathbb S_r^o\).
	This proves \({\rm (h2)}\), and therefore \({\rm (H)}\).
\end{proof}

\begin{thm}\label{Thm42} 
	Assume that the system \eqref{f} satisfies the assumptions stated above and
	that the Galerkin scheme is admissible in the sense of
	Appendix~\ref{sec:appendix-FE}. Let \(\hat\lambda^*\) be a limit point of
	the sequence \((\lambda_r^*)\). Then the following assertions hold.
	
	\begin{description}
		\item[\((1^\circ)\)]
		For all sufficiently large \(r\), the Galerkin problem admits a
		finite-dimensional saddle-point pair
		\((u_r^*,v_r^*)\in\mathbb S_r^o\times\mathbb S_r^o\) at the level
		\(\lambda_r^*\). Moreover,
		\[
		\lambda_r^*
		=
		\mathcal R(u_r^*,v_r^*)
		=
		\min_{v\in\mathbb S_r^o}\mathcal R(u_r^*,v)
		=
		\max_{u\in\mathbb S_r^o}
		\min_{v\in\mathbb S_r^o}\mathcal R(u,v).
		\]
		
		\item[\((2^\circ)\)]
		Along a subsequence realizing \(\lambda_r^*\to\hat\lambda^*\),
		not relabelled, and after a positive rescaling of \(v_r^*\), there
		exist \(u^*\in\mathbb U^o\), \(v^*\in\mathbb S\setminus\{0\}\)
		such that
		\[
		u_r^*\to u^* \quad\text{strongly in }\mathbb W,
		\qquad
		v_r^*\rightharpoonup v^* \quad\text{weakly in }\mathbb W .
		\]
		Moreover, \(0<\hat\lambda^*\le\lambda^*\), and
		\[
		\mathcal F(u^*,\hat\lambda^*)=0
		\quad\text{in }\mathbb W^*,
		\qquad
		\bigl\langle
		D_u\mathcal F(u^*,\hat\lambda^*)\xi,v^*
		\bigr\rangle=0
		\quad \forall \xi\in\mathbb W .
		\]
		Thus \((u^*,\hat\lambda^*)\) is a weak singular solution of
		\eqref{f}.
	\end{description}
\end{thm}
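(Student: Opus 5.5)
The plan is to obtain Theorem~\ref{Thm42} as a direct specialization of Theorem~\ref{Thm1}, with the abstract hypotheses supplied by the three lemmas just proved. The abstract setting is instantiated as follows: \(W=\mathbb W\), \(\mathcal S^o=\mathbb S^o\), \(\mathcal U^o=\mathbb U^o\). The equation is written with \(\mathcal A(u)=\mathcal Lu-f(u)\) and \(\mathcal G(u)=g(u)\), and the Galerkin scheme is \(\{\mathbb W_r,\mathbb S_r^o,\mathcal I_r\}\).

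Lemma~\ref{lem:verification-system} gives three things: the denominator condition (D), admissibility of the scheme (that is, (C), (CQ) and (CD)), and condition (H) for large \(r\). Condition (R) was verified before the lemmas, but only at levels \(\lambda\ge0\). Lemma~\ref{lem:bounds-lambda} gives \(c\le\lambda_r^*\le C\) with \(c>0\) for \(r\ge r_0\), so in particular \(0\le\lambda_r^*<+\infty\). All hypotheses of Theorem~\ref{Thm1}\((1^\circ)\) therefore hold, and it yields the Galerkin saddle pair \((u_r^*,v_r^*)\in\mathbb S_r^o\times\mathbb S_r^o\) together with the max--min identity. This is assertion \((1^\circ)\).

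For \((2^\circ)\), let \(\hat\lambda^*\) be a limit point of \((\lambda_r^*)\) and fix a subsequence with \(\lambda_r^*\to\hat\lambda^*\). By the uniform bounds, \(\hat\lambda^*\in[c,C]\), so \(\hat\lambda^*>0\) and \(\hat\lambda^*\in\mathbb R\). Lemma~\ref{lem:PS-application} gives the \((PS)_e\)-condition at every positive level, in particular at \(\hat\lambda^*\). Theorem~\ref{Thm1}\((2^\circ)\) then produces, after a further subsequence and a positive rescaling of \(v_r^*\), the convergences \(u_r^*\to u^*\) strongly in \(\mathbb W\) and \(v_r^*\rightharpoonup v^*\) weakly in \(\mathbb W\), with \(v^*\in\mathbb S\setminus\{0\}\). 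It also gives the limit equation \(\mathcal F(u^*,\hat\lambda^*)=0\) and the adjoint relation. Abstractly, Theorem~\ref{Thm1} first gives \(u^*\in\mathbb S^o\) and then \(u^*\in\mathbb U^o\) via (R). One point needs checking here: (R) was established only for \(\lambda\ge0\). Since \(\hat\lambda^*>0\), it applies, and the strong maximum principle and Hopf argument give \(u^*\in\mathbb U^o\).

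It remains to show \(\hat\lambda^*\le\lambda^*\). I would argue exactly as in the proof of Theorem~\ref{Thm2}. Since \(u^*\in\mathbb U^o\subset\mathbb S^o\) solves the equation weakly, testing against any \(v\in\mathbb S\setminus\{0\}\) gives \(a_m(u^*,v)-\langle f(u^*),v\rangle_m=\hat\lambda^*\langle g(u^*),v\rangle_m\), and (D) lets us divide by \(\langle g(u^*),v\rangle_m>0\). Hence \(\mathcal R(u^*,v)=\hat\lambda^*\) for all such \(v\), so \(\lambda(u^*)=\hat\lambda^*\) and therefore \(\hat\lambda^*\le\sup_{\mathbb U^o}\lambda=\lambda^*\). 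This completes \((2^\circ)\).

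The only delicate step is matching the hypotheses. In particular, (CD) for the singular concave term is delegated to Appendix~\ref{sec:appendix-CD}, and that argument is stated for \((PS)_e\)-sequences, which is exactly what the Galerkin saddle pairs are. Beyond that, I expect no real obstacle: the theorem is an assembly of the previous results.
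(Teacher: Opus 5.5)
Your proposal is correct and follows the paper's proof essentially step for step. Lemmas~\ref{lem:verification-system} and~\ref{lem:bounds-lambda} supply the hypotheses of Theorem~\ref{Thm1}\((1^\circ)\), and Lemma~\ref{lem:PS-application} at the positive level \(\hat\lambda^*\) activates Theorem~\ref{Thm1}\((2^\circ)\); then \({\rm (R)}\) places \(u^*\) in \(\mathbb U^o\), and \(\hat\lambda^*\le\lambda^*\) follows by testing the limit equation and dividing via \({\rm (D)}\). Your observation that \({\rm (R)}\) was verified only for \(\lambda\ge0\), so the positivity \(\hat\lambda^*>0\) is genuinely needed at that step, is a point the paper uses only implicitly.
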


\begin{proof}
	By Lemma~\ref{lem:verification-system}, the quotient \(\mathcal R\)
	satisfies \({\rm (D)}\), the Galerkin scheme is admissible, and condition
	\({\rm (H)}\) holds for all sufficiently large \(r\). Moreover,
	Lemma~\ref{lem:bounds-lambda} gives
	\[
	\lambda^*<+\infty,
	\qquad
	0<c_0\le\lambda_r^*\le C_0
	\]
	for all sufficiently large \(r\). Hence every limit point
	\(\hat\lambda^*\) of \((\lambda_r^*)\) is finite and positive. Applying
	Theorem~\ref{Thm1}\((1^\circ)\), we obtain assertion \((1^\circ)\).
	
	Let a subsequence, not relabelled, be chosen so that
	\[
	\lambda_r^*\to\hat\lambda^* .
	\]
	Since \(\hat\lambda^*>0\), Lemma~\ref{lem:PS-application} applies at the
	level \(\hat\lambda^*\). Therefore Theorem~\ref{Thm1}\((2^\circ)\) yields,
	after passing to a further subsequence and after a positive rescaling of
	\(v_r^*\),
	\[
	u_r^*\to u^* \quad\text{strongly in }\mathbb W,
	\qquad
	v_r^*\rightharpoonup v^* \quad\text{weakly in }\mathbb W,
	\]
	with \(u^*\in\mathbb S^o\), \(v^*\in\mathbb S\setminus\{0\}\), and
	\[
	\mathcal F(u^*,\hat\lambda^*)=0
	\quad\text{in }\mathbb W^*,
	\qquad
	\bigl\langle
	D_u\mathcal F(u^*,\hat\lambda^*)\xi,v^*
	\bigr\rangle=0
	\quad \forall \xi\in\mathbb W .
	\]
	By the regularity condition \({\rm (R)}\), this weak solution belongs to
	the regular cone, that is, \(u^*\in\mathbb U^o\).
	
	Since \(u^*\in\mathbb U^o\subset\mathbb S^o\), condition \({\rm (D)}\)
	applies. From
	\[
	\mathcal F(u^*,\hat\lambda^*)=0
	\]
	we therefore obtain
	\[
	\mathcal R(u^*,v)=\hat\lambda^*
	\qquad
	\forall v\in\mathbb S\setminus\{0\}.
	\]
	Hence
	\[
	\hat\lambda^*
	=
	\inf_{v\in\mathbb S\setminus\{0\}}\mathcal R(u^*,v)
	\le
	\sup_{u\in\mathbb U^o}
	\inf_{v\in\mathbb S\setminus\{0\}}\mathcal R(u,v)
	=
	\lambda^* .
	\]
	Thus
	\[
	0<\hat\lambda^*\le\lambda^* .
	\]
	The remaining assertions in \((2^\circ)\) have already been obtained above,
	and the proof is complete.
\end{proof}

\begin{rem}[On the role of symmetry]
	The symmetry of the principal part of \(\mathcal L\) is used only in the
	compactness argument, through the Picone-type inequalities in
	Appendix~\ref{sec:appendix-picone}. The abstract minimax construction and
	the Galerkin saddle-point argument are not intrinsically symmetric.
	Nonsymmetric operators would therefore require a replacement for this
	Picone-based compactness step.
\end{rem}

\begin{rem}[A nonsymmetric coupling]
	The assumptions above do not require the Jacobian \(f_u\) to be symmetric.
	For instance, for \(m=2\) let
	\[
	f_1(x,u)=b(x)\bigl((u^1+u^2)^{\gamma-1}
	+\varepsilon (u^2)^{\gamma-1}\bigr),
	\qquad
	f_2(x,u)=b(x)(u^1+u^2)^{\gamma-1},
	\]
	where \(2<\gamma<2^*\), \(b\in C(\overline\Omega)\), \(b\ge b_0>0\),
	and \(0<\varepsilon\ll1\). Then
	\[
	f_{1,u^2}-f_{2,u^1}
	=
	\varepsilon(\gamma-1)b(x)(u^2)^{\gamma-2},
	\]
	so \(f_u\) is genuinely nonsymmetric.
	
	The growth, cooperativity, strict coupling, and large-growth assumptions
	are immediate. Since the nonlinearities are homogeneous of degree
	\(\gamma-1\), Euler's identity gives
	\[
	\sum_{j=1}^2 f_{i,u^j}(x,u)u^j=(\gamma-1)f_i(x,u),
	\qquad i=1,2,
	\]
	and hence the radial monotonicity condition holds. Moreover, with
	\(s=u^1+u^2\),
	\[
	Q_f(x,u)\ge(\gamma-1)b(x)s^\gamma,
	\qquad
	f(x,u)\cdot u
	\le b(x)(1+\varepsilon M_\gamma)s^\gamma,
	\]
	where \(M_\gamma:=\max_{0\le t\le1}(1-t)t^{\gamma-1}\). Thus
	\[
	Q_f(x,u)
	\ge
	\frac{\gamma-1}{1+\varepsilon M_\gamma}\,f(x,u)\cdot u .
	\]
	Hence, if \(0<\varepsilon<(\gamma-2)/M_\gamma\), then
	\eqref{eq:picone-superlinearity-f} holds with some \(\theta>1\). Therefore
	the minimax scheme applies to this genuinely non-variational system.
\end{rem}


\section{A minimax bifurcation formula for one-dimensional elliptic systems}
\label{sec.5}

We now specialize the abstract framework to a one-dimensional elliptic
system. Let \(\Omega=(0,1)\), \(m\ge1\), and consider
\begin{equation}
	\label{fd1}
	\left\{
	\begin{aligned}
		-u^k_{xx}
		&=\lambda (u^k)^{q-1}+f_k(x,u),
		&& x\in(0,1),\quad k=1,\ldots,m,\\
		u^k&\ge0,
		&& x\in(0,1),\quad k=1,\ldots,m,\\
		u^k&=0,
		&& x=0,1,\quad k=1,\ldots,m .
	\end{aligned}
	\right.
\end{equation}
Here \(u=(u^1,\ldots,u^m)\), \(1<q<2\), and
\(\mathcal L=-d^2/dx^2\). We assume that the nonlinearities \(f_k\) satisfy
the assumptions stated in Section~\ref{sec.4}; for simplicity, in
\eqref{42}, \eqref{422} we take \(\gamma_1=\gamma_2=:\gamma\).

Set
\[
\mathbb W:=(H_0^1(0,1))^m,
\qquad
g(u):=\bigl((u^1)^{q-1},\ldots,(u^m)^{q-1}\bigr)^T .
\]
We use the scalar cone
\[
S^o
:=
\left\{
w\in H_0^1(0,1)\cap C([0,1]):
w>0 \ \text{in }(0,1)
\right\},
\]
and the product cones \(\mathbb S^o:=(S^o)^m\),
\(\mathbb S:=\overline{\mathbb S^o}^{\,\mathbb W}\). The regular outer cone
is \(\mathbb U^o:=(U^o)^m\), where
\[
U^o
:=
\left\{
w\in S^o:
w''\in L^\infty_{\rm loc}(0,1),
\quad
-w''\ge0
\text{ in }\mathcal D'(0,1)
\right\}.
\]
Since every positive concave function in \(H_0^1(0,1)\) satisfies a
Hopf-type lower bound near the boundary, this definition is compatible with
the regular cone used in Section~\ref{sec.4}. Moreover, every \(w\in U^o\) is
concave in \((0,1)\).

For \(u\in\mathbb S^o\) and \(v\in\mathbb S\setminus\{0\}\), define
\[
\mathcal R(u,v)
=
\frac{
	\displaystyle
	\sum_{k=1}^m\int_0^1 (u^k)'(v^k)'\,dx
	-
	\langle f(\cdot,u),v\rangle_m
}{
	\displaystyle
	\langle g(u),v\rangle_m
}.
\]
Here and below,
\[
\langle h(\cdot,u),v\rangle_m
:=
\sum_{k=1}^m\int_0^1 h_k(x,u)v^k\,dx .
\]
The corresponding minimax level is
\[
\lambda^*
:=
\sup_{u\in\mathbb U^o}
\inf_{v\in\mathbb S\setminus\{0\}}
\mathcal R(u,v).
\]
By Lemma~\ref{lem:bounds-lambda}, \(\lambda^*<+\infty\). Moreover,
\(\lambda^*>0\). Indeed, let \(\omega\in H_0^1(0,1)\) be the torsion function
\[
-\omega''=1,\qquad \omega>0 \ \text{in }(0,1),
\]
and set \(\boldsymbol\omega:=(\omega,\ldots,\omega)\). The same estimate as in
the proof of Lemma~\ref{lem:bounds-lambda} gives, for sufficiently small
\(t_0>0\),
\[
\mathcal R(t_0\boldsymbol\omega,v)\ge c>0
\qquad
\forall v\in\mathbb S\setminus\{0\}.
\]
Hence \(\lambda^*\ge c>0\). For \(u\in\mathbb U^o\), set
\[
\lambda(u):=
\inf_{v\in\mathbb S\setminus\{0\}}\mathcal R(u,v).
\]

\begin{lem}
	\label{lem:LC-1d-system}
	The lower consistency condition \({\rm (LC)}\) is satisfied for
	\eqref{fd1}.
\end{lem}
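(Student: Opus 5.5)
We need to prove (LC): for every \(u\in\mathbb U^o\) with \(\lambda(u)\) near \(\lambda^*\), \(\liminf_r\lambda_r(\mathcal I_r u)\ge\lambda(u)\). In one dimension with piecewise-linear elements and nodal cones, the Galerkin cone is generated by the hat functions \(\bar\psi_i^k\). As in the proof of Lemma~\ref{lem:verification-system}, the inner infimum therefore reduces to
\[
\lambda_r(\mathcal I_ru)=\min_{i,k}\mathcal R(\mathcal I_ru,\bar\psi_i^k),
\]
so it suffices to compare each nodal quotient with the continuous one. The plan is to exploit the special 1D structure. For \(w\in H^1_0\), the linear interpolant \(\mathcal I_rw\) satisfies \(\int (\mathcal I_r w)'\psi_i'\,dx=\int w'\psi_i'\,dx\), because \((w-\mathcal I_rw)\) vanishes at nodes and \(\psi_i'\) is piecewise constant. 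Hence
\[
a(\mathcal I_ru^k,\psi_i)=a(u^k,\psi_i)=\int_0^1(-u^{k})''\psi_i\,dx ,
\]
the latter by concavity of \(u^k\in U^o\), with \(-u''\ge0\) a nonnegative measure, locally bounded. The stiffness part of the numerator is therefore exact.

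Next I would write, for each hat \(\psi_i\) (supported on an interval \(T_i\) of length \(\sim h\)),
\[
\mathcal R(\mathcal I_ru,\bar\psi_i^k)=\frac{\int(-u^{k\prime\prime})\psi_i-\int f_k(x,\mathcal I_ru)\psi_i}{\int(\mathcal I_ru^k)^{q-1}\psi_i}.
\]
Compare this with \(\mathcal R(u,\bar\psi_i^k)\ge\lambda(u)\), where \(\bar\psi_i^k\in\mathbb S\setminus\{0\}\) is admissible in the continuous infimum. The numerators differ by \(\int(f_k(x,u)-f_k(x,\mathcal I_ru))\psi_i\), and the denominators by \(\int((u^k)^{q-1}-(\mathcal I_ru^k)^{q-1})\psi_i\). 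The key point is that both differences are small relative to the denominator \(\int (u^k)^{q-1}\psi_i\), uniformly in \(i\). Concavity gives \(\mathcal I_ru^k\le u^k\), so the denominator decreases under interpolation, which works in our favour whenever the numerator is positive; also \(f_k\) is monotone in \(u\), since \(f_{k,u^j}\ge0\), so \(-f_k(\mathcal I_ru)\ge-f_k(u)\). Thus the numerator of the discrete quotient is \(\ge\) the continuous numerator. Provided the continuous numerator is \(\ge\lambda(u)\cdot\)denominator with \(\lambda(u)>0\) (which we may assume, since \(\lambda^*>0\) and \(\delta_0<\lambda^*\)), we get
\[
\text{num}_r\ge \text{num}\ge\lambda(u)\int(u^k)^{q-1}\psi_i\ge\lambda(u)\int(\mathcal I_ru^k)^{q-1}\psi_i,
\]
so \(\mathcal R(\mathcal I_ru,\bar\psi_i^k)\ge\lambda(u)\) for every \(i,k\) and every \(r\). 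This gives even \(\lambda_r(\mathcal I_ru)\ge\lambda(u)\) exactly, hence (LC).

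The main obstacle I anticipate is the bookkeeping near the boundary and the inner-exhaustion detail. The hat functions are supported in \(\Omega_r\subset(0,1)\), and extension by zero must keep \(\bar\psi_i^k\in\mathbb S\). Also, \(\mathcal I_ru\) must lie in \(\mathbb S_r^o\) (condition (c3)). In 1D one can take \(\Omega_r=(0,1)\) with a uniform mesh, which removes the issue; otherwise one must check that the exactness identity \(a(\mathcal I_rw,\psi_i)=a(w,\psi_i)\) still holds when the interpolant is taken on \(\Omega_r\). The identity fails at the outermost nodes, and there one would use concavity together with the Hopf-type lower bound to control the extra boundary term \(\sim u'(\partial\Omega_r)\) against the denominator. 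I would first try the uniform-mesh case on \((0,1)\) and then handle the exhaustion by this boundary-layer estimate.
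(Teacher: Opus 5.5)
Your argument is correct and is essentially the paper's proof. It uses the same ingredients: exactness of the one-dimensional stiffness term under nodal interpolation, $0\le\mathcal I_ru\le u$ from concavity (so the $g$-denominator decreases and, by $f_{i,u^j}\ge0$, the $f$-term decreases), and $\lambda(u)>0$ from $\delta_0<\lambda^*$, which together give $\mathcal R(\mathcal I_ru,v)\ge\lambda(u)$ for every $v\in\mathbb S_r\setminus\{0\}$ and every $r$; your reduction to the hat functions $\bar\psi_i^k$ is only a cosmetic variant of testing all $v$.

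The exhaustion obstacle you anticipate needs no Hopf-type boundary-layer estimate. In one dimension one simply takes $\Omega_r=(0,1)$. Even for $\Omega_r=(a_r,b_r)\subsetneq(0,1)$, the only change is on the two outermost elements, where the stiffness term of $\mathcal I_ru$ exceeds that of $u$ by $u(a_r)$ and $u(b_r)$ times nonnegative nodal coefficients of $v$ divided by the corresponding mesh sizes. This difference is nonnegative, so the inequality goes in the favourable direction.
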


\begin{proof}
	Choose \(\delta_0\in(0,\lambda^*)\), and let
	\(u\in\mathbb U^o\) satisfy \(\lambda(u)\ge\lambda^*-\delta_0\).
	Then \(\lambda(u)>0\). We prove that
	\[
	\liminf_{r\to\infty}
	\inf_{v\in\mathbb S_r\setminus\{0\}}
	\mathcal R(\mathbb I_ru,v)
	\ge\lambda(u),
	\qquad
	\mathbb I_ru:=(I_ru^1,\ldots,I_ru^m).
	\]
	Since each \(u^k\) is concave, its nodal interpolant satisfies
	\(0\le I_ru^k\le u^k\) in \((0,1)\). Hence, because \(1<q<2\),
	\[
	g(\mathbb I_ru)\le g(u)
	\quad\text{componentwise},
	\qquad
	\langle g(\mathbb I_ru),v\rangle_m
	\le
	\langle g(u),v\rangle_m
	\]
	for every \(v\in\mathbb S_r\setminus\{0\}\).
	
	Let \(v\in\mathbb S_r\setminus\{0\}\). Since \(v\) is piecewise linear and
	\(I_r\) is the nodal interpolant, integration over each mesh interval gives
	\[
	\sum_{k=1}^m\int_0^1 (I_ru^k)'(v^k)'\,dx
	=
	\sum_{k=1}^m\int_0^1 (u^k)'(v^k)'\,dx .
	\]
	Moreover, by \(f_{i,u^j}\ge0\) and \(\mathbb I_ru\le u\), we have
	\(f_i(x,\mathbb I_ru)\le f_i(x,u)\), \(i=1,\ldots,m\). Since \(v\ge0\),
	it follows that
	\[
	\langle\mathcal A(\mathbb I_ru),v\rangle_m
	\ge
	\langle\mathcal A(u),v\rangle_m .
	\]
	By the definition of \(\lambda(u)\), we have
	\[
	\langle\mathcal A(u),v\rangle_m
	\ge
	\lambda(u)\langle\mathcal G(u),v\rangle_m
	\qquad
	\forall v\in\mathbb S_r\setminus\{0\}.
	\]
	Combining the preceding inequalities and using \(\lambda(u)>0\), we obtain
	\[
	\langle\mathcal A(\mathbb I_ru),v\rangle_m
	\ge
	\lambda(u)\langle\mathcal G(\mathbb I_ru),v\rangle_m .
	\]
	Since the denominator is positive on \(\mathbb S_r\setminus\{0\}\), this
	gives
	\[
	\mathcal R(\mathbb I_ru,v)\ge\lambda(u)
	\qquad
	\forall v\in\mathbb S_r\setminus\{0\}.
	\]
	Taking the infimum over \(v\) and then the lower limit as \(r\to\infty\),
	we obtain \({\rm (LC)}\).
\end{proof}

\begin{thm}
	\label{Th51}
	Assume that system \eqref{fd1} satisfies the assumptions stated above.
	Then \eqref{fd1} admits a maximal one-sided saddle-node point
	\[
	(u^*,\lambda^*)\in\mathbb U^o\times(0,+\infty).
	\]
	Moreover, \eqref{fd1} has no weak solution in \(\mathbb U^o\) for
	\(\lambda>\lambda^*\). Furthermore, there exists
	\(v^*\in\mathbb S\setminus\{0\}\) such that
	\begin{equation}
		\label{MainBd1}
		\lambda^*
		=
		\mathcal R(u^*,v^*)
		=
		\inf_{v\in\mathbb S\setminus\{0\}}\mathcal R(u^*,v)
		=
		\sup_{u\in\mathbb U^o}
		\inf_{v\in\mathbb S\setminus\{0\}}
		\mathcal R(u,v).
	\end{equation}
	In addition,
	\[
	\mathcal F(u^*,\lambda^*)=0
	\quad\text{in }\mathbb W^*,
	\qquad
	\bigl\langle
	D_u\mathcal F(u^*,\lambda^*)\xi,v^*
	\bigr\rangle=0
	\quad \forall \xi\in\mathbb W .
	\]
	If, moreover, \(v^*\in\mathbb S^o\), then
	\[
	\lambda^*
	=
	\min_{v\in\mathbb S\setminus\{0\}}\mathcal R(u^*,v)
	=
	\min_{v\in\mathbb S^o}\mathcal R(u^*,v).
	\]
\end{thm}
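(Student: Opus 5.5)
The plan is to obtain Theorem~\ref{Th51} as a direct application of Theorem~\ref{Thm2}, with the one-dimensional system \eqref{fd1} viewed as the special case \(d=1\), \(\Omega=(0,1)\), \(\mathcal L=-d^2/dx^2\) of the setting of Section~\ref{sec.4}. So the work is to check every hypothesis of Theorem~\ref{Thm2} and then read off its conclusions.

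First, the structural hypotheses. The standing assumptions \({\rm (R)}\) and \({\rm (D)}\) were verified in Section~\ref{sec.4}. For \({\rm (R)}\) we use the strong maximum principle and Hopf lower bound at levels \(\lambda\ge0\); in one dimension this is elementary, since \(-u''\ge0\) gives concavity. For \({\rm (D)}\) we use positivity of \(g(u)\) on \(\mathbb S^o\).

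Second, the Galerkin assumptions. Admissibility of the Galerkin scheme (conditions \({\rm (C)}\), \({\rm (CQ)}\), \({\rm (CD)}\)) and condition \({\rm (H)}\) follow from Lemma~\ref{lem:verification-system}. Here \(W_r\) is the space of continuous piecewise linear functions on a uniform mesh of \((0,1)\), the stiffness matrix is the irreducible \(M\)-matrix \(\mathrm{tridiag}(-1,2,-1)/h\), and the discrete maximum principle holds automatically.

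Third, the quantitative hypotheses. Lemma~\ref{lem:bounds-lambda} gives \(0<c\le\lambda_r^*\le C\) for large \(r\), together with \(\lambda^*<+\infty\). Lemma~\ref{lem:PS-application} gives the extended \((PS)_e\)-condition at every level \(\lambda>0\), hence on \([c,C]\). Its hypothesis, the discrete comparison estimate for \(A_r\), is available in 1D from the \(M\)-matrix structure. Finally, Lemma~\ref{lem:LC-1d-system} gives \({\rm (LC)}\). That lemma rests on two one-dimensional facts: the nodal interpolant of a concave function lies below it, and the identity \(\int (I_ru)'v'=\int u'v'\) holds for piecewise linear \(v\).

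With all hypotheses in place, Theorem~\ref{Thm2} supplies \((u^*,v^*)\in\mathbb U^o\times(\mathbb S\setminus\{0\})\) with \eqref{MainBd1}. It also gives \(\mathcal F(u^*,\lambda^*)=0\), the adjoint relation \(\langle D_u\mathcal F(u^*,\lambda^*)\xi,v^*\rangle=0\) for all \(\xi\in\mathbb W\), and \(\lambda^*\ge c>0\). Moreover, there are no solutions in \(\mathbb U^o\) for \(\lambda>\lambda^*\), and \((u^*,\lambda^*)\) is a maximal one-sided saddle-node point.

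For the last assertion, observe that \(\mathcal F(u^*,\lambda^*)=0\) together with \({\rm (D)}\) gives \(\mathcal R(u^*,v)=\lambda^*\) for every \(v\in\mathbb S\setminus\{0\}\). Thus the infimum is attained, in particular at \(v^*\). If \(v^*\in\mathbb S^o\subset\mathbb S\setminus\{0\}\), it is attained on \(\mathbb S^o\) too, which yields both minimum formulas.

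I expect the main obstacle to be the \((PS)_e\) and \({\rm (CD)}\) verification, not the assembly itself. The singular derivative \((q-1)u^{q-2}\) of the concave term must be controlled near \(x=0,1\) along Galerkin sequences. I would rely on the barrier \(u_n^k\ge w_n^k\) from Lemma~\ref{lem:PS-application}, together with the Appendix~\ref{sec:appendix-CD} argument, checking that in 1D the discrete sublinear solutions \(w_n^k\) satisfy a uniform lower bound \(\ge c\,d_\Omega\). This bound makes \(u_n^{q-2}v_n\xi_n\) uniformly integrable, via a Hardy-type inequality \(\int d_\Omega^{q-2}|v\xi|\le C\|v\|_{H^1}\|\xi\|_{H^1}\) valid for \(q>1\).
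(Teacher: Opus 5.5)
Your proposal is correct and follows the paper's proof essentially verbatim. You verify the hypotheses of Theorem~\ref{Thm2} through Lemmas~\ref{lem:bounds-lambda}, \ref{lem:PS-application}, \ref{lem:verification-system} and \ref{lem:LC-1d-system}, read off the minimax formula, the singularity relation and nonexistence above $\lambda^*$, and get the final assertion from $\mathcal R(u^*,v)\equiv\lambda^*$ on $\mathbb S\setminus\{0\}$. Your extra point about checking a uniform barrier $w_n^k\ge c\,d_\Omega$ for the ${\rm (CD)}$ step is sensible; the paper only asserts that estimate in the remark following Lemma~\ref{lem:CD-elliptic}.
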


\begin{proof}
	By Lemmas~\ref{lem:bounds-lambda}, \ref{lem:PS-application}, and
	\ref{lem:verification-system}, all assumptions of Theorem~\ref{Thm2} are
	satisfied except possibly the lower consistency condition \({\rm (LC)}\).
	The latter is proved in Lemma~\ref{lem:LC-1d-system}. Hence
	Theorem~\ref{Thm2} applies and yields a pair
	\[
	(u^*,v^*)\in\mathbb U^o\times(\mathbb S\setminus\{0\})
	\]
	and a number \(\lambda^*>0\) such that \eqref{MainBd1} holds, together
	with
	\[
	\mathcal F(u^*,\lambda^*)=0
	\quad\text{in }\mathbb W^*,
	\qquad
	\bigl\langle
	D_u\mathcal F(u^*,\lambda^*)\xi,v^*
	\bigr\rangle=0
	\quad \forall \xi\in\mathbb W .
	\]
	Therefore \((u^*,\lambda^*)\) is a maximal one-sided saddle-node point in
	the cone \(\mathbb U^o\).
	
	The nonexistence of weak solutions in \(\mathbb U^o\) for
	\(\lambda>\lambda^*\) follows from the minimax characterization. Indeed,
	if \(\mathcal F(u,\lambda)=0\) for some \(u\in\mathbb U^o\), then, by the
	denominator positivity,
	\[
	\mathcal R(u,v)=\lambda
	\qquad
	\forall v\in\mathbb S\setminus\{0\}.
	\]
	Hence
	\[
	\lambda
	=
	\inf_{v\in\mathbb S\setminus\{0\}}\mathcal R(u,v)
	\le
	\lambda^* .
	\]
	
	If \(v^*\in\mathbb S^o\), then \(v^*\) is admissible in the inner problem
	over \(\mathbb S^o\). Since
	\[
	\mathcal R(u^*,v^*)
	=
	\inf_{v\in\mathbb S\setminus\{0\}}\mathcal R(u^*,v),
	\]
	and \(\mathbb S^o\subset\mathbb S\setminus\{0\}\), we obtain
	\[
	\min_{v\in\mathbb S\setminus\{0\}}\mathcal R(u^*,v)
	=
	\mathcal R(u^*,v^*)
	=
	\min_{v\in\mathbb S^o}\mathcal R(u^*,v).
	\]
\end{proof}


\section{Concluding remarks}
\label{sec:ConcRem}

We have shown that, under suitable compactness, positivity, and approximation
assumptions, the maximal one-sided saddle-node value of \eqref{Gf} is
characterized by the minimax formula \eqref{MainB}. The construction is
developed for abstract nonlinear equations and then verified for a class of
elliptic systems, including non-variational ones.

Theorems~\ref{Thm1} and \ref{Thm42} show that the Galerkin scheme produces
finite - dimensional maximal one-sided saddle-node points and that their limits
are singular solutions \((u^*,\hat\lambda^*)\). Under the stronger assumptions
of Theorem~\ref{Thm2}, the limiting value coincides with the maximal
one-sided saddle-node value. Thus the method gives a direct way to locate the
critical level without first constructing a whole solution branch.

In finite dimension, the formula becomes
\[
\lambda_r(u)
=
\min_{1\le i\le N_r}
\frac{\langle\mathcal A(u),\eta_i\rangle}
{\langle\mathcal G(u),\eta_i\rangle},
\qquad
\lambda_r^*
=
\max_{u\in\mathcal S_r^o}\lambda_r(u),
\]
where \(\{\eta_1,\ldots,\eta_{N_r}\}\) is the positive basis of the Galerkin
cone. Hence the computation of \(\lambda_r^*\) is reduced to a
finite-dimensional nonsmooth maximization problem. This may be viewed as a
nonlinear Collatz--Wielandt type formula and treated by standard tools of
nonsmooth optimization \cite{Burke,Clark,DemyanMaloz}. Related
finite-dimensional minimax formulas have proved useful in nonlinear
bifurcation and eigenvalue problems
\cite{IvanIlya,IlIvan1,IlyasChaos,Salazar}.

The convergence results are close in spirit to finite-element eigenvalue
approximation, going back to Courant and developed further in
\cite{Babushka,ciaret,Courant}. Here, however, the approximated object is not
a linear eigenvalue, but a nonlinear one-sided saddle-node value selected by a
minimax procedure. In this sense, the perturbation estimate of
Theorem~\ref{Th3} plays the role of a Rayleigh-quotient type stability bound,
in analogy with classical perturbation arguments for spectral problems
\cite{Kato}.

Formula \eqref{MainB} is also related to variational characterizations of
principal spectral quantities in ordered settings, including the
Birkhoff--Varga principle for essentially positive matrices \cite{Varga} and
its extensions \cite{ilyasELA}. For elliptic operators, related
characterizations of principal eigenvalues appear in the works of
Donsker--Varadhan \cite{Donsker}, Protter--Weinberger \cite{Protte},
Nussbaum--Pinchover \cite{Nussbaum}, and Berestycki--Nirenberg--Varadhan
\cite{BerestyckiNV}.

The applications considered here are partly motivated by concave--convex
problems, where scalar equations are often treated by ordered sub- and
supersolutions; see, for instance, \cite{ABC,CazeEscobedo}. For systems, this
order structure is considerably more restrictive. The present minimax formula
offers a different route: it identifies the critical one-sided saddle-node
value directly from the extended Rayleigh quotient.

Several questions remain open. It would be natural to extend the method to
broader classes of nonlinear and non-variational systems, possibly under
weaker assumptions on the finite-dimensional cones. In particular, an important
problem is whether the convergence of finite-dimensional minimax bifurcation
formulas to their infinite-dimensional counterpart can be obtained by different
methods and under less restrictive hypotheses.

Finally, the proposed method should be viewed as complementary to classical
bifurcation theory. The Crandall--Rabinowitz theorem, Krasnosel'ski\u{\i}'s
theory, and Rabinowitz's global theorem describe the local or global structure
of solution sets under spectral and degree-theoretic assumptions. By contrast,
the present approach provides a direct minimax formula for the critical
one-sided saddle-node value. Under additional nondegeneracy assumptions,
classical local theory can then be used to prove that the singular point
selected by the minimax formula is indeed a genuine bifurcation point.


\section{Appendix A. Cones and finite-dimensional approximations}
\label{sec:appendix-FE}

This appendix records a finite-dimensional realization of the cone,
quotient-compatibility, and \(M\)-matrix assumptions required in the Galerkin
construction of Theorem~\ref{Thm1}. The Hopf-type boundary estimates used in
the continuous elliptic problem are imposed separately.

Let \(\Omega\subset\mathbb R^d\) be a bounded domain, and set
\[
W:=H_0^1(\Omega),
\qquad
\mathcal C^0:=C(\overline\Omega).
\]
We use the positive cone
\[
S^o
:=
\left\{
v\in W\cap C(\overline\Omega):
v=0 \ \text{on }\partial\Omega,\quad v>0 \ \text{in }\Omega
\right\},
\qquad
S:=\overline{S^o}^{\,\mathcal C^0}\cap W .
\]

Let \(\{\Omega_r\}_{r\ge1}\) be polyhedral subdomains such that
\(\Omega_r\subset\Omega\) and
\[
\bigcup_{r\ge1}\Omega_r=\Omega .
\]
The index \(r\) includes both the choice of the subdomain \(\Omega_r\) and
the mesh size. If \(\Omega\) itself is polyhedral, this construction reduces
to the usual finite element approximation on the fixed domain: one takes
\(\Omega_r=\Omega\) for all \(r\), and \(r\) denotes only the mesh refinement
parameter.

Let \(\mathcal T_r\) be a shape-regular conforming \(P_1\)-triangulation of
\(\Omega_r\), and let \(W_r\) be the corresponding finite element space with
zero boundary values on \(\partial\Omega_r\), extended by zero to \(\Omega\).
Thus \(W_r\subset H_0^1(\Omega)\). We assume the Galerkin density property:
for every \(\xi\in W\) there exist \(\xi_r\in W_r\) such that
\[
\xi_r\to\xi
\qquad\text{strongly in }W .
\]
This is standard for interior polyhedral exhaustions and shape-regular finite
element spaces; see, for example, \cite{ciaretRav,ciaret,Ern}.

Let \(B_1,\ldots,B_{N_r}\) be the interior nodes of \(\mathcal T_r\), and let
\(\{\psi_1,\ldots,\psi_{N_r}\}\) be the associated nodal basis. Then
\[
\psi_i(B_j)=\delta_{ij},
\qquad
\psi_i\ge0,
\qquad
\psi_i=0 \quad\text{on }\partial\Omega_r .
\]
After extension by zero outside \(\Omega_r\), each \(\psi_i\) belongs to
\(H_0^1(\Omega)\cap C(\overline\Omega)\).

Define the nodal cones
\[
S_r^o
:=
\left\{
u_r=\sum_{i=1}^{N_r}u_i\psi_i\in W_r:
u_i>0,\ i=1,\ldots,N_r
\right\},
\]
and
\[
S_r:=\overline{S_r^o}^{\,W_r}
=
\left\{
u_r=\sum_{i=1}^{N_r}u_i\psi_i\in W_r:
u_i\ge0,\ i=1,\ldots,N_r
\right\}.
\]
Then \(S_r^o\) is a nonempty open subset of \(W_r\), and \(S_r\) is a closed
polyhedral cone with positive basis
\(\{\psi_1,\ldots,\psi_{N_r}\}\).

Moreover, \(S_r\subset S\). Indeed, every \(u_r\in S_r\) is a nonnegative
continuous function in \(H_0^1(\Omega)\) and vanishes on \(\partial\Omega\).
If \(\phi\in S^o\) is fixed, then \(u_r+\varepsilon\phi\in S^o\) for every
\(\varepsilon>0\), and \(u_r+\varepsilon\phi\to u_r\) in
\(C(\overline\Omega)\) as \(\varepsilon\downarrow0\). Hence \(u_r\in S\).

Thus the cones \(S_r^o\) and \(S_r\) satisfy the abstract cone condition
\({\rm (C)}\), namely \({\rm (c1)}\)--\({\rm (c2)}\). Notice that, when
\(\Omega_r\subsetneq\Omega\), one does not generally have
\(S_r^o\subset S^o\), since functions in \(W_r\) vanish outside
\(\Omega_r\). This is precisely why the discrete denominator positivity is
included in \({\rm (CQ)}\).

For \(u\in C(\overline\Omega)\), define the nodal interpolant
\[
\mathcal I_r u:=\sum_{i=1}^{N_r}u(B_i)\psi_i .
\]
If \(u\in S^o\), then \(B_i\in\Omega\) and \(u(B_i)>0\) for every
\(i=1,\ldots,N_r\). Therefore \(\mathcal I_r u\in S_r^o\).
Hence the interpolation condition \({\rm (c3)}\) holds.

We next verify the discrete denominator positivity for the concave term used
in the elliptic system. Let \(1<q<2\) and \(g(u)=u^{q-1}\). If
\(u_r\in S_r^o\) and \(v_r\in S_r\setminus\{0\}\), then
\[
\int_\Omega u_r^{q-1}v_r\,dx>0 .
\]
Indeed, writing
\[
u_r=\sum_{i=1}^{N_r}u_i\psi_i,
\qquad
v_r=\sum_{i=1}^{N_r}v_i\psi_i,
\]
we have \(u_i>0\), \(v_i\ge0\), and \(v_{i_0}>0\) for at least one index
\(i_0\). Since \(\psi_{i_0}>0\) on a set of positive measure and
\(u_r\ge u_{i_0}\psi_{i_0}\), the integrand \(u_r^{q-1}v_r\) is positive on a
set of positive measure. Thus the denominator part of \({\rm (CQ)}\) holds in
the scalar case.

Similarly, if \(u\in S^o\) and \(v\in S\setminus\{0\}\), then
\[
\int_\Omega u^{q-1}v\,dx>0 .
\]
Indeed, \(v\ge0\), \(v\not\equiv0\), and \(u>0\) in \(\Omega\).
Thus the continuous denominator condition \({\rm (D)}\) holds for the
concave term.

For systems, we use the product spaces and cones
\[
\mathbb W_r:=(W_r)^m,
\qquad
\mathbb S_r^o:=(S_r^o)^m,
\qquad
\mathbb S_r:=(S_r)^m .
\]
Then \(\mathbb S_r\subset\mathbb S\), and \(\mathbb S_r\) is a polyhedral cone
with basis
\[
\{\psi_i e_k:\ i=1,\ldots,N_r,\ k=1,\ldots,m\},
\]
where \(e_k\) is the \(k\)-th coordinate vector in \(\mathbb R^m\). The
componentwise interpolant satisfies
\[
\mathbb I_r u\in\mathbb S_r^o
\qquad
\text{for every }u\in\mathbb S^o .
\]
Moreover, for
\(u_r\in\mathbb S_r^o\) and
\(v_r\in\mathbb S_r\setminus\{0\}\),
\[
\langle g(u_r),v_r\rangle_m
=
\sum_{k=1}^m
\int_\Omega (u_r^k)^{q-1}v_r^k\,dx
>0 .
\]
Thus the product cones satisfy \({\rm (C)}\), and the denominator part of
\({\rm (CQ)}\) follows. The finite-dimensional \(C^1\)-regularity required in
\({\rm (CQ)}\) is immediate for the restrictions considered in the elliptic
applications.

Let \(a(\cdot,\cdot)\) be the scalar bilinear form used in the elliptic
problem, for instance the form in \eqref{eq:awz}. Set
\[
A_r=(a_{ij})_{i,j=1}^{N_r},
\qquad
a_{ij}:=a(\psi_j,\psi_i).
\]
If \(u_r=\sum_j u_j\psi_j\), then
\[
(A_r\mathbf u)_i=a(u_r,\psi_i),
\qquad
\mathbf u=(u_1,\ldots,u_{N_r})^T .
\]
We assume that \(A_r\) satisfies the strong discrete maximum principle
\begin{equation}\label{eq:impl}
	A_r\vartheta\ge0,\quad \vartheta\ne0
	\quad\Longrightarrow\quad
	\vartheta>0 .
\end{equation}
This holds, for example, when \(A_r\) is a nonsingular irreducible
\(M\)-matrix. Sufficient mesh conditions are classical; see
\cite{brand,ciaretRav,ciaret,Varga}. In dimension \(d=1\), the standard
piecewise-linear discretization of \(-d^2/dx^2\) on any partition satisfies
\eqref{eq:impl}.

Finally, we record the discrete comparison principle used in the compactness
argument.

\begin{lem}[Discrete comparison for a sublinear problem]
	\label{lem:discrete-comparison-sublinear}
	Let \(A_r\) be a nonsingular \(M\)-matrix, \(0<p<1\), and \(c_0>0\).
	Suppose that \(U,W\in S_r^o\), written in nodal coordinates, satisfy
	\[
	A_rU\ge c_0U^p,
	\qquad
	A_rW=c_0W^p ,
	\]
	where the powers are understood componentwise. Then \(U\ge W\).
\end{lem}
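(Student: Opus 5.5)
The plan is a scaling (``sliding'') argument in nodal coordinates. It uses only the sign structure of an \(M\)-matrix, namely nonpositive off-diagonal entries, together with the strict sublinearity \(0<p<1\).

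Since \(U,W\in S_r^o\), all nodal coordinates are strictly positive. I would therefore set
\[
t:=\min_{1\le i\le N_r}\frac{U_i}{W_i}>0 ,
\]
and let \(i_0\) be an index at which the minimum is attained. Then \(Z:=U-tW\ge0\) componentwise and \(Z_{i_0}=0\). The claim \(U\ge W\) is equivalent to \(t\ge1\), so I argue by contradiction and assume \(0<t<1\).

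The key step is to bound \(AZ\) from below using the two relations. From \(A_rU\ge c_0U^p\) and \(U\ge tW\) with \(t>0\), monotonicity of \(s\mapsto s^p\) gives \(U^p\ge t^pW^p\) componentwise. Hence
\[
A_rZ=A_rU-tA_rW\ge c_0U^p-tc_0W^p\ge c_0\,(t^p-t)\,W^p .
\]
Because \(0<t<1\) and \(0<p<1\), we have \(t^p>t\). Together with \(W>0\), this makes every component of the right-hand side strictly positive; in particular \((A_rZ)_{i_0}>0\).

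On the other hand, evaluate the \(i_0\)-th row directly:
\[
(A_rZ)_{i_0}=a_{i_0i_0}Z_{i_0}+\sum_{j\ne i_0}a_{i_0j}Z_j=\sum_{j\ne i_0}a_{i_0j}Z_j\le0 ,
\]
since \(Z_{i_0}=0\), \(Z_j\ge0\), and the off-diagonal entries of an \(M\)-matrix are nonpositive. This contradicts the strict positivity just obtained, so \(t\ge1\) and hence \(U\ge tW\ge W\).

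The only delicate point is the strictness. It comes entirely from the gap \(t^p-t>0\), which is exactly where sublinearity \(p<1\) enters. Neither irreducibility nor nonsingularity of \(A_r\) is needed beyond the sign pattern of its entries.
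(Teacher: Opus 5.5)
Your proposal is correct and is essentially the paper's proof: the paper takes the largest \(\tau\) with \(\tau W\le U\) (this is your minimal ratio \(t\)), sets \(Z=U-\tau W\), and obtains a contradiction between \((A_rZ)_{i_0}\le 0\) and \((A_rZ)_{i_0}\ge c_0(\tau^p-\tau)W_{i_0}^p>0\). Defining \(t\) as the minimum of the nodal ratios \(U_i/W_i\) is, if anything, slightly cleaner than the paper's ``\(\max\{s\in(0,1): sW\le U\}\)''.
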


\begin{proof}
	Suppose that \(U\not\ge W\), and set
	\[
	\tau:=\max\{s\in(0,1):sW\le U\}.
	\]
	Then \(0<\tau<1\), and for some \(i_0\),
	\[
	U_{i_0}=\tau W_{i_0},
	\qquad
	U_i-\tau W_i\ge0,\quad i=1,\ldots,N_r .
	\]
	Put \(Z:=U-\tau W\). Since \(Z\ge0\), \(Z_{i_0}=0\), and \(A_r\) has
	nonpositive off-diagonal entries, we have
	\[
	(A_rZ)_{i_0}\le0 .
	\]
	On the other hand,
	\[
	(A_rZ)_{i_0}
	=
	(A_rU-\tau A_rW)_{i_0}
	\ge
	c_0U_{i_0}^p-\tau c_0W_{i_0}^p
	=
	c_0(\tau^p-\tau)W_{i_0}^p>0,
	\]
	because \(0<p<1\). This contradiction proves \(U\ge W\).
\end{proof}

In the system case, Lemma~\ref{lem:discrete-comparison-sublinear} is applied
componentwise, with the same stiffness matrix \(A_r\) in each component.

\section{Appendix B. Picone-type inequalities}
\label{sec:appendix-picone}

We record the Picone-type inequalities used in the compactness arguments.

\begin{lem}[Continuous Picone inequality]
	\label{lem:picone-continuous}
	Let
	\[
	\mathcal L u
	=
	-\partial_{x_i}\bigl(\sigma_{ij}(x)u_{x_j}\bigr)+c(x)u,
	\]
	where \((\sigma_{ij})\) is symmetric and uniformly elliptic. Let
	\(u\ge0\), \(v>0\) in \(\Omega\), and assume that
	\(u,v\in H_0^1(\Omega)\) and \(u^2/v\in H_0^1(\Omega)\). Then
	\[
	\langle\mathcal L u,u\rangle
	\ge
	\left\langle
	\mathcal L v,\frac{u^2}{v}
	\right\rangle .
	\]
\end{lem}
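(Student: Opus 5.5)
The plan is to reduce the inequality to the pointwise Picone identity. I would first prove it for smooth data and then pass to the general case by approximation. Since \(\langle\mathcal L u,u\rangle=a(u,u)\) and \(\langle\mathcal L v,u^2/v\rangle=a(v,u^2/v)\), the zero-order terms cancel exactly:
\[
c(x)\,u\cdot u=c(x)\,v\cdot\frac{u^2}{v}.
\]
Consequently the claim is equivalent to
\[
\int_\Omega\sigma_{ij}u_{x_j}u_{x_i}\,dx\ \ge\ \int_\Omega\sigma_{ij}v_{x_j}\Bigl(\frac{u^2}{v}\Bigr)_{x_i}dx .
\]
This holds for any sign of \(c\), because \(c\) no longer appears.

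Next I compute the gradient of the test function:
\[
\Bigl(\frac{u^2}{v}\Bigr)_{x_i}=\frac{2u}{v}u_{x_i}-\frac{u^2}{v^2}v_{x_i}.
\]
With the notation \(\langle\xi,\eta\rangle_\sigma:=\sigma_{ij}\xi_j\eta_i\), the pointwise difference of the integrands becomes
\[
\langle\nabla u,\nabla u\rangle_\sigma-\frac{2u}{v}\langle\nabla v,\nabla u\rangle_\sigma+\frac{u^2}{v^2}\langle\nabla v,\nabla v\rangle_\sigma
=\Bigl\langle\nabla u-\tfrac{u}{v}\nabla v,\ \nabla u-\tfrac{u}{v}\nabla v\Bigr\rangle_\sigma\ \ge0 .
\]
This step uses the symmetry of \(\sigma\) to merge the two cross terms, and uniform ellipticity (positive semidefiniteness is enough) for the sign. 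Integrating this identity gives the inequality, provided every term is integrable.

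The main obstacle is justifying this computation under the weak hypotheses \(u,v\in H_0^1\), \(v>0\), \(u^2/v\in H_0^1\). The individual terms \(\frac{u}{v}\nabla v\) need not lie in \(L^2\) near \(\partial\Omega\) or where \(v\) is small. To handle this, I would replace \(v\) by \(v_\varepsilon:=v+\varepsilon\). For \(v_\varepsilon\) the chain rule is legitimate: \(u^2/v_\varepsilon\in W^{1,1}\), and after truncating \(u\) by \(\min(u,k)\) it even lies in \(H^1\), vanishing on the boundary together with \(u\). The pointwise identity then holds a.e. with \(v_\varepsilon\), so
\[
a(u_k,u_k)\ \ge\ a\Bigl(v,\frac{u_k^2}{v_\varepsilon}\Bigr)+\int_\Omega c\Bigl(u_k^2-\frac{v\,u_k^2}{v_\varepsilon}\Bigr)dx,
\]
where \(\nabla v_\varepsilon=\nabla v\) has been used.

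Finally I let \(k\to\infty\) and then \(\varepsilon\to0\). The quantities \(u_k^2/v_\varepsilon\) increase to \(u^2/v\) pointwise, and their gradients converge a.e. The gradient \(\nabla(u_k^2/v_\varepsilon)\) can be bounded by the nonnegative square above plus \(|\nabla u|^2\) terms. Combining this with Fatou's lemma on the nonnegative quadratic form and with dominated convergence, using the assumption \(u^2/v\in H_0^1\) as the dominating object, passes to the limit and yields \(a(u,u)\ge a(v,u^2/v)\). The lower-order correction term tends to zero, since \(0\le 1-v/v_\varepsilon\le1\) and \(c\in C(\overline\Omega)\).

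If the limit passage turns out to be delicate, I would instead argue by density. I would note that the paper applies the lemma to nodal or regular functions, where \(u/v\) is bounded, and assume that additionally; then the argument above is direct.
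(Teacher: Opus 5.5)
Your proposal is correct and follows the paper's proof: the zero-order terms cancel, symmetry of $(\sigma_{ij})$ turns the principal-part difference into the nonnegative form $\sigma_{ij}\bigl(u_{x_j}-\frac{u}{v}v_{x_j}\bigr)\bigl(u_{x_i}-\frac{u}{v}v_{x_i}\bigr)$, and the weak case is reduced to this computation, a step the paper dispatches with the single phrase ``by approximation''. Your regularization by $v+\varepsilon$ and truncation of $u$ spells that step out in more detail, and the one fact it really needs is that the weak gradient of the $H_0^1$ function $u^2/v$ equals $2\frac{u}{v}\nabla u-\frac{u^2}{v^2}\nabla v$ a.e.; this follows from locality of Sobolev gradients on the sets $\{u<k,\ v>\delta\}$, and once you have it the pointwise identity can be integrated directly, so the Fatou/dominated-convergence limit passage is unnecessary.
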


\begin{proof}
	It is enough to prove the identity for smooth positive functions; the
	general case follows by approximation. The zero-order terms cancel. For
	the principal part,
	\[
	\sigma_{ij}v_{x_j}
	\left(\frac{u^2}{v}\right)_{x_i}
	-
	\sigma_{ij}u_{x_j}u_{x_i}
	=
	-
	\sigma_{ij}
	\left(u_{x_j}-\frac{u}{v}v_{x_j}\right)
	\left(u_{x_i}-\frac{u}{v}v_{x_i}\right)
	\le0 .
	\]
	After integration over \(\Omega\), this gives
	\[
	\left\langle
	\mathcal L v,\frac{u^2}{v}
	\right\rangle
	-
	\langle\mathcal L u,u\rangle
	\le0,
	\]
	and the assertion follows.
\end{proof}

\begin{lem}[Discrete Picone inequality]
	\label{lem:picone-discrete}
	Let \(A=(a_{ij})_{i,j=1}^r\) be symmetric and satisfy
	\(a_{ij}\le0\) for \(i\ne j\). Then, for every
	\(\mathbf u\in\mathbb R_+^r\) and
	\(\mathbf v\in(0,\infty)^r\),
	\[
	\mathbf u^T A\mathbf u
	\ge
	(A\mathbf v)\cdot\frac{\mathbf u^2}{\mathbf v},
	\qquad
	\frac{\mathbf u^2}{\mathbf v}
	:=
	\left(
	\frac{u_1^2}{v_1},\ldots,\frac{u_r^2}{v_r}
	\right).
	\]
\end{lem}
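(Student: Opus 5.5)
We prove the discrete Picone inequality by expanding both sides in coordinates and reducing the difference to a sum of nonnegative terms, one for each off-diagonal pair. This mirrors the continuous proof of Lemma~\ref{lem:picone-continuous}, where the zero-order terms cancel and the principal part gives a completed square.

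First expand the left-hand side:
\[
\mathbf u^TA\mathbf u=\sum_i a_{ii}u_i^2+\sum_{i\ne j}a_{ij}u_iu_j .
\]
Next expand the right-hand side:
\[
(A\mathbf v)\cdot\frac{\mathbf u^2}{\mathbf v}
=\sum_i\sum_j a_{ij}v_j\frac{u_i^2}{v_i}
=\sum_i a_{ii}u_i^2+\sum_{i\ne j}a_{ij}\frac{v_j}{v_i}u_i^2 .
\]
The diagonal contributions are identical and cancel, just as the zero-order terms do in the continuous case. Hence
\[
\mathbf u^TA\mathbf u-(A\mathbf v)\cdot\frac{\mathbf u^2}{\mathbf v}
=\sum_{i\ne j}a_{ij}\Bigl(u_iu_j-\frac{v_j}{v_i}u_i^2\Bigr).
\]

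Now use the symmetry $a_{ij}=a_{ji}$ to group the ordered pairs $(i,j)$ and $(j,i)$ into a single sum over $i<j$:
\[
\sum_{i<j}a_{ij}\Bigl(2u_iu_j-\frac{v_j}{v_i}u_i^2-\frac{v_i}{v_j}u_j^2\Bigr)
=-\sum_{i<j}a_{ij}\,v_iv_j\Bigl(\frac{u_i}{v_i}-\frac{u_j}{v_j}\Bigr)^2 .
\]
The last identity follows by expanding the square and multiplying through by $v_iv_j$. It is the discrete analogue of the completed square in Lemma~\ref{lem:picone-continuous}.

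Since $a_{ij}\le0$ for $i\ne j$ and $v_i,v_j>0$, every term in the final sum is nonnegative. This proves the inequality.

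The argument needs no sign condition on $\mathbf u$; the hypothesis $\mathbf u\in\mathbb R_+^r$ is simply inherited from the applications. The only delicate step is the pairing, which requires $A$ to be symmetric. This is consistent with the remark that symmetry of the principal part enters only through the Picone step. No other obstacle is expected.
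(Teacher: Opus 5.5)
Your proof is correct and follows essentially the same approach as the paper, which writes the difference directly as \(-\tfrac12\sum_{i,j}a_{ij}v_iv_j(z_i-z_j)^2\) with \(z_i=u_i/v_i\); this is exactly your sum over \(i<j\), with the diagonal terms vanishing automatically. Your side remark that the sign of \(\mathbf u\) plays no role is also correct, since the identity holds for every \(\mathbf u\in\mathbb R^r\).
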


\begin{proof}
	Set \(z_i:=u_i/v_i\). By symmetry,
	\[
	\mathbf u^T A\mathbf u
	-
	(A\mathbf v)\cdot\frac{\mathbf u^2}{\mathbf v}
	=
	-\frac12
	\sum_{i,j=1}^r
	a_{ij}v_iv_j(z_i-z_j)^2 .
	\]
	The diagonal terms vanish, and \(a_{ij}\le0\) for \(i\ne j\). Hence the
	right-hand side is nonnegative.
\end{proof}

The componentwise system version is obtained by summing
Lemma~\ref{lem:picone-discrete} over the components. Symmetry and the
nonpositive off-diagonal sign condition are essential in the discrete Picone
identity; an \(M\)-matrix property alone, without symmetry, is not sufficient.

\section{Appendix C. The linearized concave term}
\label{sec:appendix-concave}

We record the estimate which justifies the linearization of
\(u^{q-1}\), \(1<q<2\), on the regular positive cone. The Hopf-type lower
bound in the definition of \(\mathbb U^o\) compensates for the boundary
singularity of the derivative.

\begin{lem}[Boundedness of the linearized concave term]
	\label{lem:concave-differentiability}
	Let \(1<q<2\), and set
	\[
	g(u):=\bigl((u^1)^{q-1},\ldots,(u^m)^{q-1}\bigr)^T .
	\]
	For every \(u\in\mathbb U^o\), the formal derivative
	\[
	\bigl\langle Dg(u)\xi,\eta\bigr\rangle_m
	=
	(q-1)\sum_{k=1}^m
	\int_\Omega
	(u^k)^{q-2}\xi^k\eta^k\,dx
	\]
	defines a bounded operator \(Dg(u):\mathbb W\to\mathbb W^*\). In
	particular, \(D_u\mathcal F(u,\lambda)\) is well defined for every
	\(u\in\mathbb U^o\) and \(\lambda\in\mathbb R\).
\end{lem}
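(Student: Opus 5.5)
The plan is to combine the Hopf-type lower bound built into the definition of \(\mathbb U^o\) with a Hardy inequality. This controls the singular weight \((u^k)^{q-2}\) near \(\partial\Omega\).

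\emph{Step 1: pointwise bound on the weight.} Fix \(u\in\mathbb U^o\). For each component \(u^k\in U^o\) there is, as recorded in Section~\ref{sec.4}, a constant \(c_k>0\) with
\[
u^k\ge c_k d_\Omega \quad\text{in }\Omega .
\]
Since \(q-2<0\), this gives
\[
0<(u^k)^{q-2}\le c_k^{q-2}d_\Omega^{\,q-2}\quad\text{in }\Omega .
\]

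\emph{Step 2: reduction to a Hardy-type estimate.} For \(\xi,\eta\in\mathbb W\),
\[
\Bigl|\int_\Omega (u^k)^{q-2}\xi^k\eta^k\,dx\Bigr|
\le C\int_\Omega d_\Omega^{\,q-2}|\xi^k||\eta^k|\,dx
= C\int_\Omega d_\Omega^{\,q-1}\,\frac{|\xi^k|}{d_\Omega}\,|\eta^k|\,dx .
\]
Since \(q-1>0\) and \(\Omega\) is bounded, \(d_\Omega^{\,q-1}\le C'\). By Cauchy--Schwarz and the Hardy inequality \(\|w/d_\Omega\|_{L^2(\Omega)}\le C_H\|\nabla w\|_{L^2(\Omega)}\) on \(H_0^1(\Omega)\),
\[
\int_\Omega d_\Omega^{\,q-2}|\xi^k||\eta^k|\,dx\le C\,\|\xi^k/d_\Omega\|_{L^2}\,\|\eta^k\|_{L^2}\le C\|\xi^k\|_{H_0^1}\|\eta^k\|_{H_0^1}.
\]
The Hardy inequality is valid because \(\Omega\) is regular enough for the Hopf principle, for instance Lipschitz. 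Summing over \(k\) gives
\[
|\langle Dg(u)\xi,\eta\rangle_m|\le C_u\|\xi\|_{\mathbb W}\|\eta\|_{\mathbb W}.
\]
Hence \(Dg(u):\mathbb W\to\mathbb W^*\) is a bounded linear operator.

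\emph{Step 3: conclusion for \(D_u\mathcal F\).} By \eqref{422} and the Sobolev embedding with \(\gamma_2<2^*\), the operator \(D_u f(u)\) is bounded from \(\mathbb W\) to \(\mathbb W^*\). Here \(u\in(C(\overline\Omega))^m\) helps but is not needed. The principal part \(\mathcal L\) is bounded \(\mathbb W\to\mathbb W^*\). It follows that
\[
D_u\mathcal F(u,\lambda)=\mathcal L-D_uf(u)-\lambda Dg(u)
\]
is well defined and bounded for every \(\lambda\in\mathbb R\).

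\emph{Main obstacle.} The delicate point is the boundary singularity of \((u^k)^{q-2}\). The statement only asserts boundedness of the formal derivative, so Steps 1--2 suffice for that. If one also wants to identify this operator as the genuine Gâteaux derivative of \(g\), the plan is to use the mean value formula
\[
(u+t\xi)^{q-1}-u^{q-1}=(q-1)t\int_0^1(u+st\xi)^{q-2}\xi\,ds
\]
on the set where \(|t\xi|\le u/2\). There \((u+st\xi)^{q-2}\le 2^{2-q}u^{q-2}\), which gives the same Hardy-type domination. On the complementary set, \(|u|\le 2|t\xi|\), the contribution vanishes in \(L^1\)-duality as \(t\to0\). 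Dominated convergence would then close this part.
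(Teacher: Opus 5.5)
Your proposal is correct and follows essentially the same route as the paper: the Hopf-type lower bound $u^k\ge c_k d_\Omega$ gives $(u^k)^{q-2}\le C d_\Omega^{q-2}\le C d_\Omega^{-1}$ on the bounded domain, and Cauchy--Schwarz with the Hardy inequality then yields $|\langle Dg(u)\xi,\eta\rangle_m|\le C\|\xi\|_{\mathbb W}\|\eta\|_{\mathbb W}$. Your Step~3 and the G\^ateaux-differentiability sketch go beyond what the paper writes out; they are consistent with it, but the stated boundedness does not need them.
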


\begin{proof}
	Since \(u\in\mathbb U^o\), each component satisfies
	\(u^k\ge c_kd_\Omega\) in \(\Omega\). Hence, because \(1<q<2\),
	\[
	(u^k)^{q-2}\le C d_\Omega^{q-2}\le C d_\Omega^{-1}.
	\]
	By the Hardy inequality,
	\[
	\int_\Omega
	(u^k)^{q-2}|\xi^k||\eta^k|\,dx
	\le
	C
	\left\|\frac{\xi^k}{d_\Omega}\right\|_{L^2}
	\|\eta^k\|_{L^2}
	\le
	C\|\xi^k\|_{H_0^1}\|\eta^k\|_{H_0^1}.
	\]
	Summing over \(k=1,\ldots,m\) gives
	\[
	\bigl|
	\langle Dg(u)\xi,\eta\rangle_m
	\bigr|
	\le
	C\|\xi\|_{\mathbb W}\|\eta\|_{\mathbb W}.
	\]
	The assertion follows.
\end{proof}

\section{Appendix D. Linearized closedness along Galerkin limits}
\label{sec:appendix-CD}

We verify the closedness properties used in the passage from Galerkin
saddle-point pairs to the limiting elliptic system. Throughout this appendix,
functions defined on \(\Omega_r\) are extended by zero to \(\Omega\). We use
that the admissible domains \(\Omega_r\Subset\Omega\) satisfy a uniform Hardy
inequality and exhaust \(\Omega\) from inside.

\begin{lem}[Residual and linearized closedness]
	\label{lem:CD-elliptic}
	Let \(r_n\to\infty\), \(\lambda_n\to\lambda\),
	\(u_n\in\mathbb S_{r_n}^o\), \(v_n\in\mathbb S_{r_n}^o\), and assume that
	\[
	u_n\to u \quad\text{strongly in }\mathbb W,
	\qquad
	v_n\rightharpoonup v \quad\text{weakly in }\mathbb W,
	\qquad
	u,v\in\mathbb S\setminus\{0\}.
	\]
	Then, for every \(\zeta\in\mathbb W\) and every
	\(\zeta_n\in\mathbb W_{r_n}\) such that
	\(\zeta_n\to\zeta\) strongly in \(\mathbb W\),
	\[
	\langle\mathcal F(u_n,\lambda_n),\zeta_n\rangle
	\to
	\langle\mathcal F(u,\lambda),\zeta\rangle .
	\]
	
	Assume, in addition, that \(u\in\mathbb U^o\) and that \((u_n)\) satisfies
	the discrete barrier estimate
	\[
	u_n^k\ge c\,d_{\Omega_{r_n}}
	\qquad\text{in }\Omega_{r_n},\quad k=1,\ldots,m,
	\]
	with \(c>0\) independent of \(n\), where
	\(d_{\Omega_{r_n}}(x):=\operatorname{dist}(x,\partial\Omega_{r_n})\).
	Then, for every \(\xi\in\mathbb W\) and every
	\(\xi_n\in\mathbb W_{r_n}\) such that
	\(\xi_n\to\xi\) strongly in \(\mathbb W\),
	\[
	\bigl\langle
	D_u\mathcal F(u_n,\lambda_n)\xi_n,v_n
	\bigr\rangle
	\to
	\bigl\langle
	D_u\mathcal F(u,\lambda)\xi,v
	\bigr\rangle .
	\]
\end{lem}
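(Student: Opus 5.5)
The residual statement will follow by splitting $\mathcal F$ into its three parts:
\[
\langle\mathcal F(u_n,\lambda_n),\zeta_n\rangle
=
a_m(u_n,\zeta_n)-\langle f(u_n),\zeta_n\rangle_m-\lambda_n\langle g(u_n),\zeta_n\rangle_m .
\]
We pass to the limit in each term separately.

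The bilinear term converges because $u_n\to u$ and $\zeta_n\to\zeta$ strongly in $\mathbb W$ and $a_m$ is continuous.

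For the superlinear term, the growth bound \eqref{42} with $\gamma_2<2^*$ and the Sobolev embedding give $f(u_n)\to f(u)$ in $\mathbb L^{(2^*)'}$; for $d\le2$ we use any large exponent instead. This is the continuity of the Nemytskii operator. Since $\zeta_n\to\zeta$ in $\mathbb L^{2^*}$, the pairing converges.

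For the concave term we use $0\le (u_n^k)^{q-1}\le 1+|u_n^k|$, a.e. convergence along subsequences, and Vitali's theorem. These give $g(u_n)\to g(u)$ in $\mathbb L^2$, and $\lambda_n\to\lambda$ handles the factor. Convergence of the full sequence follows by the usual subsequence-of-subsequence argument.

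For the linearized statement, write
\[
\langle D_u\mathcal F(u_n,\lambda_n)\xi_n,v_n\rangle
=
a_m(\xi_n,v_n)-\langle f_u(u_n)\xi_n,v_n\rangle_m-\lambda_n(q-1)\sum_k\int_\Omega (u_n^k)^{q-2}\xi_n^kv_n^k\,dx .
\]
The first term converges because it is a strongly convergent sequence paired against a weakly convergent one: $a_m(\xi_n,\cdot)\to a_m(\xi,\cdot)$ in $\mathbb W^*$.

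For the second term we use the derivative bound \eqref{422}. It gives $f_u(u_n)\to f_u(u)$ in $L^{s}$ with $s=2^*/(\gamma_2-2)$, up to subsequence and by Vitali. Combined with $\xi_n\to\xi$ strongly in $\mathbb L^{2^*}$ and $v_n\to v$ strongly in $\mathbb L^\beta$ for suitable $\beta<2^*$ (compact embedding), the Hölder exponents close precisely because $\gamma_2<2^*$. The product therefore converges.

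The main obstacle is the singular term $(u_n^k)^{q-2}\xi_n^kv_n^k$. Here I would use the barrier estimate. Since $u_n^k\ge c\,d_{\Omega_{r_n}}$ on $\Omega_{r_n}$, while $\xi_n,v_n$ vanish outside $\Omega_{r_n}$, we have the pointwise bound
\[
(u_n^k)^{q-2}|\xi_n^k||v_n^k|\le C\,d_{\Omega_{r_n}}^{q-2}|\xi_n^k||v_n^k| .
\]
We will prove convergence through a splitting. We write
\[
d_{\Omega_{r_n}}^{q-2}|\xi_n||v_n|
=
\Big(d_{\Omega_{r_n}}^{-1}|\xi_n|\Big)\Big(d_{\Omega_{r_n}}^{q-1}|v_n|\Big),
\]
and apply the uniform Hardy inequality on $\Omega_{r_n}$ to the first factor. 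This gives
\[
\|\xi_n/d_{\Omega_{r_n}}\|_{L^2}\le C\|\xi_n\|_{H^1_0}.
\]
Applying the same inequality to $\xi_n-\xi$ (extended appropriately), the sequence $\xi_n/d_{\Omega_{r_n}}$ is Cauchy-close to a strongly convergent family. The factor $d^{q-1}|v_n|$ gains integrability from $q-1>0$.

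To make this rigorous, fix a compact $K\Subset\Omega$. For large $n$ we have $K\subset\Omega_{r_n}$ with $d_{\Omega_{r_n}}\ge c_K>0$ on $K$. On $K$ the integrand is bounded by $C_K|\xi_n||v_n|$, so it converges by strong $L^2$ convergence of $\xi_n$ and $v_n$ together with a.e. convergence of $u_n$ and $u^{q-2}\le C_K$; this uses $u\in\mathbb U^o$. On $\Omega\setminus K$ we estimate the integral by
\[
C\,\|\xi_n/d_{\Omega_{r_n}}\|_{L^2}\,\big\|d^{q-1}v_n\big\|_{L^2(\Omega\setminus K)}
\le C\,\sup_{\Omega\setminus K}d^{\,q-1}\,\|\xi_n\|_{\mathbb W}\|v_n\|_{L^2}.
\]
This is uniformly small as $K\uparrow\Omega$, since $d\le\operatorname{dist}(x,\partial\Omega)$ is small on the boundary layer.

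The same boundary-layer bound holds for the limit integrand by Lemma~\ref{lem:concave-differentiability}. Letting first $n\to\infty$ and then $K\uparrow\Omega$ gives convergence of the singular term. A subsequence-uniqueness argument then yields convergence of the whole sequence.
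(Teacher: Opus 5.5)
Your proof is correct and is essentially the paper's argument: the same three-term splitting, convergence on a fixed $K\Subset\Omega$ where the barrier makes $(u_n^k)^{q-2}$ uniformly bounded, and control of the boundary layer $\Omega_{r_n}\setminus K$ by the barrier estimate plus the uniform Hardy inequality, with the Hopf-type bound for $u$ handling the limit integrand. The one variation is where the boundary-layer smallness comes from: you take it from the factor $d_{\Omega_{r_n}}^{q-1}\le d_\Omega^{q-1}$, while the paper takes it from $\|v_n\|_{L^2(\Omega_{r_n}\setminus K)}$ via strong $L^2$ convergence of $v_n$; separately, your heuristic aside about applying Hardy on $\Omega_{r_n}$ to $\xi_n-\xi$ is not valid as stated (since $\xi\notin H_0^1(\Omega_{r_n})$), but your rigorous argument never uses it.
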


\begin{proof}
	The residual convergence follows from the continuity of \(a_m\), the
	strong convergence \(u_n\to u\) in \(\mathbb W\), compact subcritical
	embeddings, and the growth assumptions on \(f\) and \(g\).
	
	We prove the convergence of the linearized identities. The terms
	\(a_m(\xi_n,v_n)\) and
	\(\langle f_u(u_n)\xi_n,v_n\rangle_m\) pass to the limit by the strong
	convergence of \(\xi_n\), the weak convergence of \(v_n\), compact
	subcritical embeddings, and the growth assumption on \(f_u\). It remains to
	treat
	\[
	\sum_{k=1}^m
	\int_{\Omega_{r_n}}
	(u_n^k)^{q-2}\xi_n^k v_n^k\,dx .
	\]
	Fix \(K\Subset\Omega\). For all large \(n\), \(K\subset\Omega_{r_n}\), and
	the interior exhaustion gives \(d_{\Omega_{r_n}}\ge\delta_K>0\) on \(K\).
	Hence the barrier estimate implies \(u_n^k\ge c\delta_K\) on \(K\). Since
	\(1<q<2\), the functions \((u_n^k)^{q-2}\) are uniformly bounded on \(K\)
	and converge a.e. to \((u^k)^{q-2}\). Therefore
	\[
	(u_n^k)^{q-2}\xi_n^k
	\to
	(u^k)^{q-2}\xi^k
	\quad\text{strongly in }L^2(K).
	\]
	As \(v_n^k\rightharpoonup v^k\) weakly in \(L^2(K)\), we get
	\[
	\int_K (u_n^k)^{q-2}\xi_n^k v_n^k\,dx
	\to
	\int_K (u^k)^{q-2}\xi^k v^k\,dx .
	\]
	
	It remains to control the boundary part. By the discrete barrier estimate,
	\(1<q<2\), and the uniform Hardy inequality,
	\[
	\int_E (u_n^k)^{q-2}|\xi_n^k||v_n^k|\,dx
	\le
	C\|\xi_n^k\|_{H_0^1(\Omega_{r_n})}\|v_n^k\|_{L^2(E)}
	\]
	for every measurable \(E\subset\Omega_{r_n}\). Taking
	\(E=\Omega_{r_n}\setminus K\), using the compactness of the embedding
	\(\mathbb W\hookrightarrow\mathbb L^2\), which gives
	\(v_n\to v\) strongly in \(\mathbb L^2\), and then choosing
	\(K\uparrow\Omega\), we make this contribution uniformly small. The same
	Hardy estimate, now using the Hopf-type lower bound for \(u\), controls the
	limiting integral on \(\Omega\setminus K\). Hence
	\[
	\int_{\Omega_{r_n}}
	(u_n^k)^{q-2}\xi_n^k v_n^k\,dx
	\to
	\int_\Omega
	(u^k)^{q-2}\xi^k v^k\,dx .
	\]
	Summing over \(k=1,\ldots,m\) and using \(\lambda_n\to\lambda\), we obtain
	the desired convergence.
\end{proof}

\begin{rem}
	In the extended \((PS)_e\)-sequences used in
	Lemma~\ref{lem:PS-application}, the required discrete barrier estimate is
	obtained by comparison with the positive scalar sublinear Galerkin
	solution. Hence Lemma~\ref{lem:CD-elliptic} verifies the linearized
	closedness condition \({\rm (CD)}\) for the elliptic Galerkin limits needed
	in the proof of Theorem~\ref{Thm42}.
\end{rem}

\end{document}